\newtheorem{theorem}{Theorem}[section]
\newtheorem{definition}[theorem]{Definition}
{\theorembodyfont{\rmfamily} \newtheorem{example}[theorem]{Example}}
\newtheorem{problem}[theorem]{Problem}
\newtheorem{proposition}[theorem]{Proposition}
\title{\LARGE \bf An Observable Canonical Form\\
for a Rational System on a Variety}
\author{Jana N\v{e}mcov\'{a}$^{1}$ and Jan H. van Schuppen$^{2}$%
\thanks{The support of the University of Chemistry and Technology
  for the cooperation of the two authors is herewith gratefully acknowledged.}%
\thanks{$^{1}$Jana N\v{e}mcov\'{a} is with the Department of Mathematics,
  University of Chemistry and Technology, Technik\'{a} 5, 166 28 Prague 6, Czech Republic
  {\tt\small Jana.Nemcova@vscht.cz}} \\
\thanks{$^{2}$ Jan H. van Schuppen is with
  Van Schuppen Control Re\-se\-arch,
  Gouden Leeuw 143, 1103 KB Amsterdam, The Netherlands
  {\tt\small jan.h.van.schuppen@xs4all.nl}}%
}
\begin{document}

\newcommand{\sign}{\mbox{sign}}

\newcommand{\mathbbd}{\mbox{${\mathbb{D}}$}} 
\newcommand{\DS}{\mbox{${\mathbb{DS}}$}} 
\newcommand{\DSC}{\mbox{${\mathbb{DSC}}$}} 
\newcommand{\M}{\mbox{${\mathbb{M}}$}} 
\newcommand{\N}{\mbox{${\mathbb{N}}$}} 
\newcommand{\PP}{\mbox{${\mathbb{P}}$}} 
\newcommand{\Q}{\mbox{${\mathbb{Q}}$}} 
\newcommand{\W}{\mbox{${\mathbb{W}}$}} 
\newcommand{\C}{\mbox{$C$}} 
\newcommand{\R}{\mbox{$R$}} 
\newcommand{\T}{\mbox{$T$}} 
\newcommand{\Z}{\mbox{$Z$}}

\newcommand{\boundary}{\mbox{$\rm bndry$}}
\newcommand{\card}{\mbox{$\rm card$}}
\newcommand{\cisetsmin}{\mbox{$\rm CISets_{\min}$}}
\newcommand{\cisets}{\mbox{$\rm CISets$}}
\newcommand{\cix}{\mbox{$\rm CIX$}}
\newcommand{\colpwr}{\mbox{Colpwr}}
\newcommand{\closure}{\mbox{cl}}
\newcommand{\da}{\mbox{$\rm DA$}}
\newcommand{\domainofattraction}{\mbox{$DA$}}
\newcommand{\ds}{\mbox{$\rm DS$}}
\newcommand{\graph}{\mbox{Graph}}
\newcommand{\image}{\mbox{Img}}
\newcommand{\init}{\mbox{Init}}
\newcommand{\interior}{\mbox{int}}
\newcommand{\Mp}{\mbox{\bf M}}
\newcommand{\powerset}{\mbox{Pwrset}}
\newcommand{\pwr}{\mbox{Pwrset}}

\newcommand{\orderlexg}{\mbox{$>_{{\rm lex}}$}}
\newcommand{\orderlexl}{\mbox{$<_{{\rm lex}}$}}
\newcommand{\nm}{\mbox{$\mathbb{N}_m$}}
\newcommand{\relationconnect}{\mbox{${\rm R_{connect}}$}}
\newcommand{\zk}{\mbox{$\mathbb{Z}_k$}}
\newcommand{\zn}{\mbox{$\mathbb{Z}_n$}}
\newcommand{\zm}{\mbox{$\mathbb{Z}_m$}}
\newcommand{\zo}{\mbox{${\rm O}$}} 
\newcommand{\zr}{\mbox{$\mathbb{Z}_r$}}
\newcommand{\zp}{\mbox{$\mathbb{Z}_p$}}
\newcommand{\zpos}{\mbox{$\mathbb{Z}_{+}$}}
\newcommand{\zposn}{\mbox{$\mathbb{Z}_{+}^n$}}

\newcommand{\almosteverywhere}{\mbox{$\forall E$}}
\newcommand{\realnumbersextended}{\mbox{$\overline{\mathbb{R}}$}}
\newcommand{\real}{\mbox{$\mathbb{R}$}}
\newcommand{\reals}{\mbox{$\mathbb{R}$}}

\newcommand{\cdiscclosed}{\mbox{$\mathbb{D}_c$}}
\newcommand{\cdiscopen}{\mbox{$\mathbb{D}_o$}}
\newcommand{\cdiscoutopen}{\mbox{$(\mathbb{D}^c)_o$}}
\newcommand{\cminus}{\mbox{$\mathbb{C}^-$}}
\newcommand{\cplus}{\mbox{$\mathbb{C}^+$}}
\newcommand{\im}{\mbox{\rm Im}}
\newcommand{\re}{\mbox{\rm Re}}

\newcommand{\cilspaces}{\mbox{${\rm CILSpaces}$}} 
\newcommand{\cilssp}{\mbox{${\rm CILSP}$}} 
\newcommand{\cilsp}{\mbox{${\rm CILSp}$}} 
\newcommand{\cli}{\mbox{${\rm CLI}$}} 
\newcommand{\climin}{\mbox{${\rm CLI_{min}}$}} 
\newcommand{\cn}{\mbox{$\mathbb{C}^n$}} 
\newcommand{\cmcm}{\mbox{$\mathbb{C}^m$}} 
\newcommand{\linearsubspaces}{\mbox{${\rm LinSubspaces}$}} 
\newcommand{\lat}{\mbox{${\rm Lat}$}} 
\newcommand{\quotientspaces}{\mbox{${\rm Quotientspaces}$}} 
\newcommand{\rinfty}{\mbox{$\mathbb{R}^{\infty}$}}
\newcommand{\rk}{\mbox{$\mathbb{R}^k$}}
\newcommand{\rmrm}{\mbox{$\mathbb{R}^m$}}
\newcommand{\rn}{\mbox{$\mathbb{R}^n$}} 
\newcommand{\rp}{\mbox{$\mathbb{R}^p$}}
\newcommand{\rpos}{\mbox{$\mathbb{R}_{+}$}}
\newcommand{\rposk}{\mbox{$\mathbb{R}_{+}^k$}}
\newcommand{\rposm}{\mbox{$\mathbb{R}_{+}^m$}}
\newcommand{\rposn}{\mbox{$\mathbb{R}_{+}^n$}}
\newcommand{\rposp}{\mbox{$\mathbb{R}_{+}^p$}}
\newcommand{\rr}{\mbox{$\mathbb{R}^r$}}
\newcommand{\rspos}{\mbox{$\mathbb{R}_{s+}$}}
\newcommand{\rsposn}{\mbox{$\mathbb{R}_{s+}^n$}}
\newcommand{\simplexn}{\mbox{$\mathbb{S}_{+}^n$}}
\newcommand{\sn}{\mbox{$\mathbb{S}_{+}^n$}}

\newcommand{\cimod}{\mbox{${\rm CIModules}$}} 

\newcommand{\switch}{\mbox{${\rm Switch}$}} 
\newcommand{\threshold}{\mbox{${\rm Threshold}$}}

\newcommand{\ara}{\mbox{${\rm ARA}$}} 
\newcommand{\dom}{\mbox{\rm Dom}}
\newcommand{\rdom}{\mbox{$\rm Dom$}}
\newcommand{\Fc}{\mbox{${\bf F}$}} 
\newcommand{\hara}{\mbox{${\rm HARA}$}} 
\newcommand{\range}{\mbox{\rm Range}}
\newcommand{\rra}{\mbox{${\rm RRA}$}} 
\newcommand{\U}{\mbox{${\rm U}$}} 
\newcommand{\A}{\mbox{${\rm A}$}} 

\newcommand{\fanal}{\mathcal{C}^{\omega}}
\newcommand{\fsmooth}{\mathcal{C}^{\infty}}
\newcommand{\fcont}{\mathcal{C}}
\newcommand{\fpcont}{\mathcal{PC}}
\newcommand{\fpconst}{\mathcal{PC}onst}

\newcommand{\arrow}{\mbox{${\rm arrow}$}}
\newcommand{\blockdiagonal}{\mbox{${\rm Bdiag}$}}
\newcommand{\cnm}{\mbox{$\mathbb{C}^{n \times m}$}}
\newcommand{\cnn}{\mbox{$\mathbb{C}^{n \times n}$}}
\newcommand{\cnp}{\mbox{$\mathbb{C}^{n \times p}$}}
\newcommand{\cpn}{\mbox{$\mathbb{C}^{p \times n}$}}
\newcommand{\cpp}{\mbox{$\mathbb{C}^{p \times p}$}}
\newcommand{\cum}{\mbox{${\rm cum}$}}
\newcommand{\dnn}{\mbox{$\mathbb{R}_{\rm diag}^{n \times n}$}}
\newcommand{\dnnpos}{\mbox{$\mathbb{R}_{\rm +, diag}^{n \times n}$}}
\newcommand{\dnnspos}{\mbox{$\mathbb{R}_{\rm diag, +}^{n \times n}$}}
\newcommand{\imp}{\mbox{${\rm Imprim}$}}
\newcommand{\invertible}{\mbox{${\rm inv}$}}
\newcommand{\rkk}{\mbox{$\mathbb{R}^{k \times k}$}}
\newcommand{\rkm}{\mbox{$\mathbb{R}^{k \times m}$}}
\newcommand{\rkn}{\mbox{$\mathbb{R}^{k \times n}$}}
\newcommand{\rmk}{\mbox{$\mathbb{R}^{m \times k}$}}
\newcommand{\rmm}{\mbox{$\mathbb{R}^{m \times m}$}}
\newcommand{\rmn}{\mbox{$\mathbb{R}^{m \times n}$}}
\newcommand{\rmnarrow}{\mbox{$\mathbb{R}_{\arrow}^{m \times n}$}}
\newcommand{\rmnblockdiagonal}{\mbox{$\mathbb{R}_{{\rm Bdiag}}^{m \times n}$}}
\newcommand{\rmncoordinated}{\mbox{$\mathbb{R}_{{\rm c}}^{m \times n}$}}
\newcommand{\rmr}{\mbox{$\mathbb{R}^{m \times r}$}}
\newcommand{\rnk}{\mbox{$\mathbb{R}^{n \times k}$}}
\newcommand{\rnm}{\mbox{$\mathbb{R}^{n \times m}$}}
\newcommand{\rnn}{\mbox{$\mathbb{R}^{n \times n}$}}
\newcommand{\rnnarrow}{\mbox{$\mathbb{R}_{\arrow}^{n \times n}$}}
\newcommand{\rnncoordinated}{\mbox{$\mathbb{R}_{{\rm c}}^{n \times n}$}}
\newcommand{\rnnspd}{\mbox{$\mathbb{R}_{spd}^{n \times n}$}}
\newcommand{\rnnsspd}{\mbox{$\mathbb{R}_{sspd}^{n \times n}$}}
\newcommand{\rnp}{\mbox{$\mathbb{R}^{n \times p}$}}
\newcommand{\rnr}{\mbox{$\mathbb{R}^{n \times r}$}}
\newcommand{\rpk}{\mbox{$\mathbb{R}^{p \times k}$}}
\newcommand{\rpm}{\mbox{$\mathbb{R}^{p \times m}$}}
\newcommand{\rpn}{\mbox{$\mathbb{R}^{p \times n}$}}
\newcommand{\rpp}{\mbox{$\mathbb{R}^{p \times p}$}}
\newcommand{\rpr}{\mbox{$\mathbb{R}^{p \times r}$}}
\newcommand{\rrn}{\mbox{$\mathbb{R}^{r \times n}$}}
\newcommand{\rrr}{\mbox{$\mathbb{R}^{r \times r}$}}
\newcommand{\sposdefnn}{\mbox{$\mathbb{RSPD}^{n \times n}$}}
\newcommand{\spd}{\mbox{$\mathbb{R}_{\rm spd}^{n \times n}$}}

\newcommand{\dposnn}{\mbox{$\mathbb{R}_{+,{\rm diag}}^{n \times n}$}}
\newcommand{\dpossnn}{\mbox{$\mathbb{R}_{s+,{\rm diag}}^{n \times n}$}}
\newcommand{\dsposnn}{\mbox{$\mathbb{R}_{s+,{\rm diag}}^{n \times n}$}}
\newcommand{\dscnn}{\mbox{$\mathbb{DSC}_{+}^{n \times n}$}}
\newcommand{\dsnn}{\mbox{$\mathbb{DS}_{+}^{n \times n}$}}
\newcommand{\indeximprim}{\mbox{\rm imprim}}
\newcommand{\indexprim}{\mbox{\rm indexprim}}
\newcommand{\imprim}{\mbox{\rm imprim}}
\newcommand{\matrixrowtrunc}{\mbox{$\rm matrowtrunc$}}
\newcommand{\matrixdiagtrunc}{\mbox{$\rm mdtrunc$}}
\newcommand{\mnn}{\mbox{$\mathbb{M}_{+}^{n \times n}$}}
\newcommand{\mposkk}{\mbox{$M_{+}^{k \times k}$}}
\newcommand{\mposmm}{\mbox{$M_{+}^{m \times m}$}}
\newcommand{\mposnn}{\mbox{$M_{+}^{n \times n}$}}
\newcommand{\nn}{\mbox{$\mathbb{N}_n$}}
\newcommand{\permnn}{\mbox{$\mathbb{P}^{n \times n}$}}
\newcommand{\permpp}{\mbox{$\mathbb{P}^{p \times p}$}}
\newcommand{\posR}[2]{\mbox{$R_{+}^{#1\times#2}$}}
\newcommand{\rposik}{\mbox{$\mathbb{R}_{+}^{\infty\times k}$}}
\newcommand{\rposim}{\mbox{$\mathbb{R}_{+}^{\infty\times m}$}}
\newcommand{\rposin}{\mbox{$\mathbb{R}_{+}^{\infty\times n}$}}
\newcommand{\rposiq}{\mbox{$\mathbb{R}_{+}^{\infty\times q}$}}
\newcommand{\rposkk}{\mbox{$\mathbb{R}_{+}^{k \times k}$}}
\newcommand{\rposkm}{\mbox{$\mathbb{R}_{+}^{k \times m}$}}
\newcommand{\rposkn}{\mbox{$\mathbb{R}_{+}^{k \times n}$}}
\newcommand{\rposkp}{\mbox{$\mathbb{R}_{+}^{k \times p}$}}
\newcommand{\rposmm}{\mbox{$\mathbb{R}_{+}^{m \times m}$}}
\newcommand{\rposmk}{\mbox{$\mathbb{R}_{+}^{m \times k}$}}
\newcommand{\rposmn}{\mbox{$\mathbb{R}_{+}^{m \times n}$}}
\newcommand{\rposnk}{\mbox{$\mathbb{R}_{+}^{n \times k}$}}
\newcommand{\rposnm}{\mbox{$\mathbb{R}_{+}^{n \times m}$}}
\newcommand{\rposnn}{\mbox{$\mathbb{R}_{+}^{n \times n}$}}
\newcommand{\rposnp}{\mbox{$\mathbb{R}_{+}^{n \times p}$}}
\newcommand{\rpospm}{\mbox{$\mathbb{R}_{+}^{p \times m}$}}
\newcommand{\rpospn}{\mbox{$\mathbb{R}_{+}^{p \times n}$}}
\newcommand{\rposqm}{\mbox{$\mathbb{R}_{+}^{q \times m}$}}
\newcommand{\rposqn}{\mbox{$\mathbb{R}_{+}^{q \times n}$}}
\newcommand{\rposqq}{\mbox{$\mathbb{R}_{+}^{q \times q}$}}
\newcommand{\rsposnn}{\mbox{$\mathbb{R}_{s+}^{n \times n}$}}
\newcommand{\Table}{\mbox{$\rm table$}}
\newcommand{\wnn}{\mbox{$\W^{n \times n}$}}

\newcommand{\adjoint}{\mbox{$\rm Adj$}}
\newcommand{\affine}{\mbox{$\rm Affine$}}
\newcommand{\affinedim}{\mbox{$\rm Affdim$}}
\newcommand{\affineb}{\mbox{$\rm AffB$}}
\newcommand{\blockdiag}{\mbox{{\rm Block-diag}}}
\newcommand{\circulant}{\mbox{$\rm Circulant$}}
\newcommand{\cols}{\mbox{$\rm col$}}
\newcommand{\diag}{\mbox{$\rm Diag$}}
\newcommand{\dsc}{doubly stochastic circulant }
\newcommand{\dscs}{doubly stochastic circulants }
\newcommand{\eig}{\mbox{$\rm eig$}}
\newcommand{\posr}[1]{#1\mbox{\rm -pos-rank}}
\newcommand{\posrank}{\mbox{$\rm pos-rank$}}
\newcommand{\projection}{\mbox{$\rm proj$}}
\newcommand{\rank}{\mbox{$\rm rank$}}
\newcommand{\rowrankz}{\mbox{$\rm row-rank_Z$}}
\newcommand{\lspan}{\mbox{$\rm span$}}
\newcommand{\specrad}{\mbox{$\rm specrad$}}
\newcommand{\spec}{\mbox{$\rm spec$}}
\newcommand{\spectrum}{\mbox{$\rm spec$}}
\newcommand{\tr}{\mbox{$\rm tr$}}
\newcommand{\trace}{\mbox{\rm trace}}

\newcommand{\affinehull}{\mbox{$\rm affh$}}
\newcommand{\convexhull}{\mbox{$\rm convh$}}
\newcommand{\ri}{\mbox{\rm ri}}
\newcommand{\triangulation}{\mbox{\rm Tr}}

\newcommand{\cekk}{\mbox{$CE_{k,k}$}}
\newcommand{\cekm}{\mbox{$CE_{k,m}$}}
\newcommand{\cekn}{\mbox{$CE_{k,n}$}}
\newcommand{\cenn}{\mbox{$CE_{n,n}$}}
\newcommand{\ckk}{\mbox{$\mathbb{C}_{k,k}$}}
\newcommand{\ckm}{\mbox{$\mathbb{C}_{k,m}$}}
\newcommand{\cone}{\mbox{\rm cone}} 
\newcommand{\faces}{\mbox{\rm Faces}}
\newcommand{\hp}{\mbox{\rm HyperPlane}}
\newcommand{\normalvector}{\mbox{$V_{normal}$}}
\newcommand{\phs}{\mbox{$PHS$}}
\newcommand{\plsets}{\mbox{$PLSets$}}
\newcommand{\polyhedralcone}{\mbox{\rm Polyhcone}}
\newcommand{\shp}{\mbox{\rm SupportHyperPlane}}
\newcommand{\subpolytope}{\mbox{\rm SubPolytope}}
\newcommand{\subrectangle}{\mbox{\rm SubRectangle}}
\newcommand{\vertices}{\mbox{$V_{vertices}$}}

\newcommand{\qy}{\mbox{$Q_y$}}
\newcommand{\qlsdp}{\mbox{${\bf \partial Q_{lsdp}}$}}
\newcommand{\qp}{\mbox{${\bf Q_{lsp}}$}}
\newcommand{\qpd}{\mbox{${\bf Q_{lsdp}}$}}
\newcommand{\qpr}{\mbox{${\bf Q_{lsp,r}}$}}
\newcommand{\qprs}{\mbox{${\bf Q_{lsp,s}}$}}
\newcommand{\qpdr}{\mbox{${\bf Q_{lsdp,r}}$}}
\newcommand{\dqps}{\mbox{${\bf \partial Q_{lsp,r}}$}}
\newcommand{\dqpss}{\mbox{${\bf \partial Q_{lsp,s}}$}}
\newcommand{\dqpdr}{\mbox{${\bf \partial Q_{lsdp,r}}$}}
\newcommand{\dqprs}{\mbox{${\bf \partial Q_{lsp,r,s}}$}}
\newcommand{\dqpdrs}{\mbox{${\bf \partial Q_{lsdp,r,s}}$}}
\newcommand{\dqp}{\mbox{${\bf \partial Q_{lsp}}$}}
\newcommand{\dqpd}{\mbox{${\bf \partial Q_{lsdp}}$}}

\newcommand{\con}{\mbox{${\rm con}$}}
\newcommand{\conmat}{\mbox{${\rm conmat}$}}
\newcommand{\conset}{\mbox{${\rm Conset}$}}
\newcommand{\coconset}{\mbox{${\rm co-Conset}$}}
\newcommand{\controllablepair}{\mbox{${\rm conpair}$}}
\newcommand{\controllabilitymatrix}{\mbox{${\rm conmat}$}}
\newcommand{\controllableset}{\mbox{${\rm conset}$}}
\newcommand{\cocontrollableset}{\mbox{${\rm co-conset}$}}
\newcommand{\controlset}{\mbox{$\rm controlset$}}
\newcommand{\ls}{\mbox{${\rm LS}$}}
\newcommand{\lsp}{\mbox{${\rm LSP}$}}
\newcommand{\lspmin}{\mbox{${\rm LSP_{min}}$}}
\newcommand{\reachm}{\mbox{${\rm reachm}$}}
\newcommand{\obsm}{\mbox{${\rm obsm}$}}
\newcommand{\obsmat}{\mbox{${\rm obsmat}$}}
\newcommand{\obsmap}{\mbox{${\rm obsmap}$}}
\newcommand{\realization}{\mbox{${\rm realiz}$}}
\newcommand{\reconmap}{\mbox{${\rm reconmap}$}}

\newcommand{\argmin}{\mbox{${\rm argmin}$}}
\newcommand{\argmax}{\mbox{${\rm argmax}$}}
\newcommand{\rai}{\mbox{${\rm RAI}$}}

\newcommand{\aobs}{\mbox{${\rm A_{obs}}$}}
\newcommand{\bdinf}{\mbox{${\rm bdinf}$}}
\newcommand{\bdjac}{\mbox{${\rm bdjac}$}}
\newcommand{\bdsup}{\mbox{${\rm bdsup}$}}
\newcommand{\can}{\mbox{${\rm can}$}}
\newcommand{\obs}{\mbox{${\rm obs}$}}
\newcommand{\qobs}{\mbox{${\rm Q_{obs}}$}}
\newcommand{\svdtruncation}{\mbox{${\rm SVDtrunc}$}}
\newcommand{\zclosure}{\mbox{${\rm \mathcal{Z}-cl}$}}

\newcommand{\act}{\mbox{$\rm act$}}
\newcommand{\aux}{\mbox{$\rm Aux$}}
\newcommand{\cat}{\mbox{$\rm cat$}}
\newcommand{\cigdes}{\mbox{$\rm CIG$}}
\newcommand{\cil}{\mbox{$\rm CIL$}}
\newcommand{\child}{\mbox{$\rm Chi$}}
\newcommand{\co}{\mbox{$\rm CO$}}
\newcommand{\codes}{\mbox{$\rm CODES$}}
\newcommand{\controllabletriple}{\mbox{$\rm ConTriple$}}
\newcommand{\coreach}{\mbox{$\rm coreach$}}
\newcommand{\coreachcomponent}{\mbox{$\rm coreachco$}}
\newcommand{\coreachgen}{\mbox{$\rm coreachgen$}}
\newcommand{\coreachset}{\mbox{$\rm coreachset$}}
\newcommand{\csublanguage}{\mbox{$\rm C$}}
\newcommand{\cpcsublanguage}{\mbox{$\rm C_{pc}$}}
\newcommand{\csuplanguage}{\mbox{$\rm CSupL$}}
\newcommand{\cpcsuplanguage}{\mbox{$\rm CSupL_{pc}$}}
\newcommand{\decdes}{\mbox{$\rm decDES$}}
\newcommand{\infcsuplanguage}{\mbox{$\inf {\rm CSupL}$}}
\newcommand{\infcsuppclanguage}{\mbox{$\inf {\rm CSupL_{pc}}$}}
\newcommand{\last}{\mbox{$\rm last$}}
\newcommand{\length}{\mbox{$\rm length$}}
\newcommand{\markact}{\mbox{$\rm markact$}}
\newcommand{\moddes}{\mbox{$\rm modDES$}}
\newcommand{\nextact}{\mbox{$\rm nextact$}}
\newcommand{\normaltuple}{\mbox{$\rm NormalTuple$}}
\newcommand{\parent}{\mbox{$\rm Par$}}
\newcommand{\prefix}{\mbox{$\rm prefix$}}
\newcommand{\ps}{\mbox{$\rm PS$}}
\newcommand{\qeq}{\mbox{$\rm QEQ$}}
\newcommand{\reachcomponent}{\mbox{$\rm reachco$}}
\newcommand{\reachgen}{\mbox{$\rm reachgen$}}
\newcommand{\reachset}{\mbox{$\rm reachset$}}
\newcommand{\runs}{\mbox{$\rm Runs$}}
\newcommand{\selfloop}{\mbox{$\rm selfloop$}}
\newcommand{\setldfg}{\mbox{$\rm SetL_{DFG}$}}
\newcommand{\setllc}{\mbox{$\rm SetL_{lc}$}}
\newcommand{\setllcn}{\mbox{$\rm SetL_{lcn}$}}
\newcommand{\setln}{\mbox{$\rm SetL_N$}}
\newcommand{\setlnfg}{\mbox{$\rm SetL_{NFG}$}}
\newcommand{\setlreg}{\mbox{$\rm SetL_{reg}$}}
\newcommand{\suffix}{\mbox{$\rm suffix$}}
\newcommand{\shuffle}{\mbox{$\rm shuffle$}}
\newcommand{\size}{\mbox{$\rm Size$}}
\newcommand{\starrr}{\mbox{$\rm Star$}}
\newcommand{\supap}{\mbox{$\sup {\rm AP}$}}
\newcommand{\supc}{\mbox{$\sup {\rm C}$}}
\newcommand{\supcc}{\mbox{$\sup {\rm cC}$}}
\newcommand{\supccn}{\mbox{$\sup {\rm cCN}$}}
\newcommand{\supcsublanguage}{\mbox{$\sup {\rm C}$}}
\newcommand{\supcpcsublanguage}{\mbox{$\sup {\rm C_{pc}}$}}
\newcommand{\supcnsublanguage}{\mbox{$\sup {\rm CN}$}}
\newcommand{\supcn}{\mbox{$\sup {\rm CN}$}}
\newcommand{\supmccn}{\mbox{$\sup {\rm mcCN}$}}
\newcommand{\suppc}{\mbox{$\sup {\rm PC}$}}
\newcommand{\suppn}{\mbox{$\sup {\rm PN}$}}
\newcommand{\supn}{\mbox{$\sup {\rm N}$}}
\newcommand{\TIME}{\mbox{$\rm TIME$}}
\newcommand{\transrel}{\mbox{$\rm Tr$}}
\newcommand{\trim}{\mbox{$\rm trim$}}
\newcommand{\trimgen}{\mbox{$\rm trimgen$}}
\newcommand{\triple}{\mbox{$\rm tri$}}
\newcommand{\uc}{\mbox{$\rm uc$}}

\newcommand{\exitset}{\mbox{$\rm ExitSet$}}

\newcommand{\cogstocsp}{\mbox{${\rm COGStocSP}$}}
\newcommand{\cvf}{\mbox{${\rm cvf}$}}
\newcommand{\gstocs}{\mbox{${\rm GStocS}$}}
\newcommand{\gstocsp}{\mbox{${\rm GStocSP}$}}
\newcommand{\gstoccsp}{\mbox{${\rm GStocCSP}$}}
\newcommand{\lqg}{\mbox{${\rm LQG}$}}
\newcommand{\mv}{\mbox{${\rm Min.Var}$}}
\newcommand{\ti}{\mbox{${\rm Time.Inv}$}}

\newcommand{\fstocs}{\mbox{${\rm FStocS}$}}
\newcommand{\fss}{\mbox{${\rm FSS}$}}
\newcommand{\fstocsp}{\mbox{${\rm FStocSP}$}}

\newcommand{\stoccs}{\mbox{${\rm StocCS}$}}
\newcommand{\stocs}{\mbox{${\rm StocS}$}}

\newcommand{\is}{\mbox{${\rm IS}$}}
\newcommand{\isdsrs}{\mbox{${\rm ISDSrs}$}}
\newcommand{\isdsones}{\mbox{${\rm ISDS1s}$}}
\newcommand{\isdsrscommon}{\mbox{${\rm ISDSrsCommon}$}}
\newcommand{\isdsonescommon}{\mbox{${\rm ISDS1sCommon}$}}
\newcommand{\isdsrsprivate}{\mbox{${\rm ISDSrsPrivate}$}}
\newcommand{\isnn}{\mbox{${\rm ISNN}$}}
\newcommand{\inn}{\mbox{${\rm I_{nn}}$}}

\newcommand{\bits}{\mbox{$\rm bits$}}

\newcommand{\dtime}{\mbox{$\rm DTIME$}}
\newcommand{\exptime}{\mbox{$\rm EXPTIME$}}
\newcommand{\np}{\mbox{$\rm NP$}}
\newcommand{\ntime}{\mbox{$\rm NTIME$}}
\newcommand{\polylog}{\mbox{$\rm polylog$}}
\newcommand{\timecomplexity}{\mbox{$\rm TIME$}}

\newcommand{\glnr}{\mbox{$Gl_{n}(\mathbb{R})$}}

\newcommand{\aut}{\mbox{${\bf Aut}$}}
\newcommand{\catset}{\mbox{${\bf Set}$}}
\newcommand{\comp}{\mbox{$comp$}}
\newcommand{\Grp}{\mbox{${\bf Grp}$}}
\newcommand{\kaut}{\mbox{${\bf K-Aut}$}}
\newcommand{\kmedv}{\mbox{${\bf K-Medv}$}}
\newcommand{\kmedvio}{\mbox{$({\bf K-Medv} \downarrow <I^+,O>)$}}
\newcommand{\moduler}{\mbox{${\bf Module_R}$}}
\newcommand{\ob}{\mbox{$ob$}}
\newcommand{\Set}{\mbox{${\bf Set}$}}
\newcommand{\cattopo}{\mbox{${\bf Topo}$}}

\newcommand{\cont}{\mbox{${\rm cont}$}}
\newcommand{\Deg}{\mbox{${\rm Deg}$}}
\newcommand{\degree}{\mbox{${\rm deg}$}}
\newcommand{\diff}{\mbox{${\rm diff}$}}
\newcommand{\gmon}{\mbox{${\rm G_{mon}}$}}
\newcommand{\diffrpos}{\mbox{${\rm Diff} \mathbb{R}_+$}}
\newcommand{\lcm}{\mbox{${\rm lcm}$}}
\newcommand{\mon}{\mbox{${\rm mon}$}}
\newcommand{\mnm}{\mbox{${\rm mnm}$}}
\newcommand{\order}{\mbox{${\rm order}$}}
\newcommand{\rpoly}[2]{\mbox{$R_+[#1]/(#1^{#2}-1)$}}
\newcommand{\slsp}{\mbox{${\rm SL}\Sigma{\rm P}$}}
\newcommand{\spoly}[2]{\mbox{$S_+[#1]/(#1^{#2}-1)$}}
\newcommand{\sqfree}{\mbox{${\rm sqfree}$}}
\newcommand{\support}{\mbox{${\rm support}$}}
\newcommand{\trdeg}{\mbox{${\rm trdeg}$}}

\newcommand{\as}{\mbox{{\rm $a.s.$}}} 
\newcommand{\aslim}{\mbox{{\rm $a.s.-\lim$}}} 
\newcommand{\ci}{\mbox{{\rm $CI$}}} 
\newcommand{\cig}{\mbox{{\rm $CIG$}}} 
\newcommand{\cigmin}{\mbox{{\rm $CIG_{min}$}}} 
\newcommand{\ciffg}{\mbox{$(F_1,F_2 | G ) \in \ci$}}
\newcommand{\cpdf}{\mbox{{\rm $CPDF$}}} 
\newcommand{\dlim}{\mbox{{\rm $D-\lim$}}} 
\newcommand{\essinf}{\mbox{{\rm $essinf$}}} 
\newcommand{\esssup}{\mbox{{\rm $esssup$}}} 
\newcommand{\foralmostall}{\mbox{{\rm $\mbox{almost all}$}}} 
\newcommand{\ift}{\mbox{{\rm $ift$}}} 
\newcommand{\ltwolim}{\mbox{{\rm $L_2-\lim$}}} 
\newcommand{\pdf}{\mbox{{\rm $PDF$}}} 
\newcommand{\pessinf}{\mbox{{\rm $P-essinf$}}} 
\newcommand{\pesssup}{\mbox{{\rm $P-esssup$}}} 
\newcommand{\plim}{\mbox{{\rm $P-\lim$}}} 

\newcommand{\aloc}{\mbox{{\rm ${\bf A_{loc}}$}}}
\newcommand{\alocplus}{\mbox{{\rm ${\bf A_{loc}^+}$}}}
\newcommand{\aone}{\mbox{{\rm ${\bf A_1}$}}}
\newcommand{\aplus}{\mbox{{\rm ${\bf A^+}$}}}
\newcommand{\bvar}{\mbox{{\rm $BV$}}}
\newcommand{\bvarc}{\mbox{{\rm $BV^c$}}}
\newcommand{\cadlag}{\mbox{{\rm c\`{a}dl\`{a}g}}}
\newcommand{\DL}{\mbox{{\rm $DL$}}}
\newcommand{\mone}{\mbox{{\rm $M_1$}}}
\newcommand{\monec}{\mbox{{\rm $M_1^c$}}}
\newcommand{\monepos}{\mbox{{\rm $M_{+,1}$}}}
\newcommand{\moneu}{\mbox{{\rm $M_{1u}$}}}
\newcommand{\moneuc}{\mbox{{\rm $M_{1u}^c$}}}
\newcommand{\moneuloc}{\mbox{{\rm $M_{1uloc}$}}}
\newcommand{\moneulocc}{\mbox{{\rm $M_{1uloc}^c$}}}
\newcommand{\mtwo}{\mbox{{\rm $M_2$}}}
\newcommand{\mtwos}{\mbox{{\rm $M_{2s}$}}}
\newcommand{\mtwosc}{\mbox{{\rm $M_{2s}^c$}}}
\newcommand{\mtwosd}{\mbox{{\rm $M_{2s}^d$}}}
\newcommand{\mtwosloc}{\mbox{{\rm $M_{2sloc}$}}}
\newcommand{\mtwoslocc}{\mbox{{\rm $M_{2sloc}^c$}}}
\newcommand{\mtwoslocd}{\mbox{{\rm $M_{2sloc}^d$}}}
\newcommand{\mtwoc}{\mbox{{\rm $M_2^c$}}}
\newcommand{\var}{\mbox{{\rm $Var$}}}
\newcommand{\semim}{\mbox{{\rm $SemM$}}}
\newcommand{\semimc}{\mbox{{\rm $SemM^c$}}}
\newcommand{\semimone}{\mbox{{\rm $SemM_1$}}}
\newcommand{\semimonec}{\mbox{{\rm $SemM_1^c$}}}
\newcommand{\semimspecial}{\mbox{{\rm $SemM_s$}}}
\newcommand{\semimtwo}{\mbox{{\rm $SemM_2$}}}
\newcommand{\semimtwoc}{\mbox{{\rm $SemM_2^c$}}}
\newcommand{\semimloc}{\mbox{{\rm $SemM_{loc}$}}}
\newcommand{\semimlocc}{\mbox{{\rm $SemM_{loc}^c$}}}
\newcommand{\stoppingtimes}{\mbox{{\rm $T_{st}$}}}
\newcommand{\stoppingtimesinfty}{\mbox{{\rm $T_{st \uparrow \infty}$}}}
\newcommand{\submone}{\mbox{{\rm $SubM_1$}}}
\newcommand{\submonec}{\mbox{{\rm $SubM_1^c$}}}
\newcommand{\submonepos}{\mbox{{\rm $SubM_{+,1}$}}}
\newcommand{\submpos}{\mbox{{\rm $SubM_+$}}}
\newcommand{\submposc}{\mbox{{\rm $SubM_+^c$}}}
\newcommand{\supmone}{\mbox{{\rm $SupM_1$}}}

\newcommand{\deficiency}{\mbox{{\rm dfc}}}
\newcommand{\reactionnet}{\mbox{{\rm rnet}}}
\newcommand{\rnet}{\mbox{{\rm $rnet$}}}
\newcommand{\netc}{\mbox{{\rm $net_c$}}}
\newcommand{\nets}{\mbox{{\rm $net_s$}}}

\newcommand{\lane}{\mbox{$\rm lane$}}
\newcommand{\od}{\mbox{$\rm OD$}}
\newcommand{\pldim}{\mbox{$\rm m$} \times \mbox{$\rm km/h$} \times \mbox{$\rm veh}}
\newcommand{\roadnet}{\mbox{$\rm RoadNet$}}
\newcommand{\roadsection}{\mbox{$\rm Secs$}}
\newcommand{\subnet}{\mbox{$\rm SubNet$}}
\newcommand{\subnetin}{\mbox{$\rm SubNet_{in}$}}
\newcommand{\subnetout}{\mbox{$\rm SubNet_{out}$}}
\newcommand{\subnetlink}{\mbox{$\rm R_{link}$}}

\newcommand{\dl}{D_{\lambda}}
\newcommand{\occ}{\mbox{\rm Occ}}

\newcommand{\ta}{\widetilde{\alpha}}
\newcommand{\tb}{\widetilde{\beta}}
\newcommand{\tg}{\widetilde{\gamma}}
\newcommand{\td}{\widetilde{\delta}}
\newcommand{\te}{\widetilde{\varepsilon}}
\newcommand{\tx}{\widetilde{\xi}}
\newcommand{\tp}{\widetilde{p}}
\newcommand{\tm}{\widetilde{M}}
\newcommand{\wt}[1]{\widetilde{#1}}
\newcommand{\wh}[1]{\widehat{#1}}

\newcommand{\TT}{T\!\!\!\! I}
\newcommand{\DD}{D\!\!\!\! I}
\newcommand{\RR}{R\!\!\!\! I}
\newcommand{\CC}{C\!\!\!\! I}
\newcommand{\NN}{N\!\!\!\!\!\! I\;}      
 

\maketitle
\thispagestyle{empty}
\pagestyle{empty}

\begin{abstract}
An observable canonical form is formulated
for the set of rational systems on a variety
each of which is a single-input-single-output,
affine in the input, and
a minimal realization of its response map.
The equivalence relation for the canonical form
is defined by the condition
that two equivalent systems have the same response map.
A proof is provided that the defined form is well-defined canonical form.
Special cases are discussed.
\end{abstract}
%
\section{INTRODUCTION}\label{sec:intro}
%
%
The purpose of the paper is to define
an observable canonical form for a rational system on a variety
which is single-input-single-output and affine in the input.
\par
The motivation of a canonical form for this set of systems
is their use in system identification.
In case of blackbox modeling with a rational system, 
a canonical form is needed.
If no canonical form is used then there arises
a problem of nonidentifiability of the system parametrization
with serious consequences.
The canonical form in this case is based on the equivalence relation 
which relates two rational systems if they have the same response map.
\par
There is another motivation which is control synthesis of rational systems.
For this a canonical form is needed
for both response-map equivalence and for feedback equivalence.
The research issue to define a canonical form
of rational systems for these combined equivalence relations
is briefly discussed in the paper but it is not the main focus.
\par
The main results of the paper are the formulation of the concept of
the observable canonical form of a rational system and
the theorem that the observable canonical form is well defined.
\par
A summary of the remainder of the paper follows.
The next section provides a more detailed problem formulation.
Rational systems are defined in Section \ref{sec:rationalsystems}
while the canonical form is defined in Section \ref{sec:canonicalforms}.
That the defined observable canonical form is a well-defined canonical
form is established in Section \ref{sec:theory}.
%
\section{PROBLEM FORMULATION}\label{sec:problem}
To be able to motivate the use of canonical forms,
it has to be explained what they are.
\par
Consider a set of control systems,
each system of which has an input, a state, and an output.
Associate with each system its response map
which maps, for every time,
the past input trajectory to the output at that time.
The realization problem is the converse issue.
Consider an arbitrary response map,
which for every time maps a past input trajectory to the output at that time.
The realization problem is to construct, for a considered response map,
a system in a considered set of which the associated response map
equals the considered response map.
A condition must hold for an arbitrary response map
to have a realization as a system in the considered set.
But if the condition holds,
then there may exist not one but many systems having the considered response map.
Hence one restricts attention to those systems representing the response map
which are minimal in a sense to be defined,
usually related to the dimension of the state set.
Again, there is not a unique minimal system representing
the considered response map but a set of systems.
Consider thus the subset of systems 
each of which is a minimal realization of its own response map.
Define then an equivalence relation on any tuple of that subset of systems
if both systems have the same response map.
\par
A canonical form or normal form is now a subset of the subset
of the considered systems for the above defined equivalence relation.
A canonical form requires that each system in the considered subset is equivalent
to a unique element of the canonical form.
\par
A major motivation for canonical forms is system identification.
In system identification,
a canonical form of a set of systems can be used
to restrict the problem of how to estimate the parameters of the system.
On the set of minimal systems
one defines the equivalence relation of two systems having the same response map.
Without a canonical form there is an identifiability problem
meaning that there exist two or more different minimal systems
which represent the same response map.
\par
A second application of a canonical form is control synthesis.
In this case the equivalence relation is based on
(1) the equality of the response map, and
(2) feedback equivalence.
Feedback equivalence is defined for two systems
if the second system can be obtained from the first system
by a state-feedback control law and a new input.
A canonical form for these properties simplifies the control synthesis.
\par
Canonical forms for time-invariant finite-dimensional linear systems
have been formulated and proven.
P. Brunovsky formulated a canonical form, \cite{brunovsky:1970}.
W.M. Wonham and A. Morse, \cite{wonham:morse:1972},
have formulated a canonical form for the equivalence
of the response map and feedback equivalence.
The well known text book, \cite{wonham:1979}, also describes this well.
Another early paper is \cite{popov:1972}.
Books which contain a discussion on canonical forms for linear systems include:
\cite[Section 9.2]{callier:desoer:1991};
\cite[pp. 187-192, pp. 198-199, Section 5.5, Section 6.3, Section 6.4]{chen:1984};
\cite[Sections 6.7.3, 7.1]{kailath:1980:book};
\cite[pp. 494-508]{padulo:arbib:1974};
\cite[p. 292]{sontag:1998:book}; and
\cite[p. 39, Section 5.5]{wonham:1974}.
\par
In the book
\cite[pp. 137--142]{isidori:2001}
there is defined a normal form in local coordinates
for a single-input-single-output
and affine-in-the-input smooth nonlinear system on the state set $\mathbb{R}^n$.
But that form is not a canonical form as defined in this paper.
Note that the normal form of that reference is obviously not controllable.
There is neither a proof nor a claim in that reference
that the normal form is a well-defined canonical form.
\par
I.A. Tall and W. Respondek,
\cite{tall:respondek:2003},
have formulated a canonical form
for smooth nonlinear systems on a differential manifold
for the equivalence relation of feedback equivalence.
The approach of this paper differs from that of the paper of Tall and Respondek
in that they consider feedback equivalence
while in this paper only the equivalence of the response map is addressed
and that in this paper
the variety plays a role in the formulation of the canonical form.
A rational system on a variety is characterized
by both the variety and by the system.
This makes the problem more complicated than the contribution
of \cite{tall:respondek:2003}.
\begin{problem}\label{problem:rsystemvarietycanonicalform}
	Consider the set of rational systems on a variety,
	each of which is single-input-single-output, 
	affine in the input, and
	each of which is a minimal realization of its response map.
	Formulate a candidate canonical form for this set of systems
	and prove that it is a well-defined canonical form.
\end{problem}
The extensions to multi-input-multi-output and to other subsets of rational systems
require much more space then is available in this short paper.
%
\section{RATIONAL SYSTEMS}\label{sec:rationalsystems}
\par
Terminology and notation of commutative algebra
is used from the books
\cite{eisenbud:1995,becker:weisspfenning:1993,bochnak:coste:roy:1998,zariski:samuel:1958,zariski:samuel:1960}.
References on algebraic geometry include,
\cite{cox:little:oshea:1992,harris:1992,hartshorne:1977,shafarevich:1994,shafarevich:1994b}.
\par
The notation of the paper is simple.
The set of the {\em integers} is denoted by $\mathbb{Z}$
and the set of the {\em strictly positive integers} by
$\mathbb{Z}_+ = \{ 1,2, \ldots \}$.
For $n \in \mathbb{Z}_+$ define
$\mathbb{Z}_n = \{ 1, 2, \ldots, n \}$.
The set of the {\em natural numbers} is denoted by
$\mathbb{N} = \{ 0, 1, 2, \ldots \}$
and, for $n \in \mathbb{Z}_+$, $\mathbb{N}_n = \{ 0, 1, 2, \ldots, n \}$.
The set of the real numbers is denoted by $\mathbb{R}$ and
that of the positive and the strictly-positive real numbers
respectively by
$\mathbb{R}_+ = [0, \infty)$ and $\mathbb{R}_{s+} = (0,\infty)$.
The {\em vector space of $n$-tuples of the real numbers} is denoted by
$\mathbb{R}^n$.
\par
A subset $X \subseteq \mathbb{R}^n$ for $n \in \mathbb{Z}_+$
is called a {\em variety}
if it is determined by a finite set of polynomial equalities.
Such a set is also called an {\em algebraic set}.
\par
A variety is called {\em irreducible} if it cannot be written
as a union of two disjoint nonempty varieties.
Any variety determined by a set of polynomials $p_1, ~ \dots, p_k$
of the form,
\begin{eqnarray*}
	X
	& = & \{ x \in \mathbb{R}^n | ~ 0 = p_i(x), ~ \forall ~ i \in \mathbb{Z}_k \}, ~
	      n, ~ k \in \mathbb{Z}_+,
\end{eqnarray*}
is an irreducible variety according to
\cite[Section 5.5, Proposition 5]{cox:little:oshea:1992}.
A canonical form for an irreducible variety can be formulated
based on the concept of a decomposition of such a variety using prime ideals.
This will not be detailed in this short paper.
\par
Denote the {\em algebra of polynomials} in $n \in \mathbb{Z}_+$
{\em variables} with real coefficients by
$\mathbb{R}[X_1, \ldots, X_n]$.
For a variety $X$,
denote by $I(X)$ the ideal of polynomials of
$\mathbb{R}[X_1, \ldots, X_n]$ which vanish on the variety $X$.
The elements of
$\mathbb{R}[X_1, \ldots, X_n] / I(X)$
are referred to as
{\em polynomials on the variety} $X$.
The ring of all such polynomials is denoted by $A_X$
which is also an algebra.
If the variety $X$ is irreducible
then the ring $A_X$ is an integral domain
hence one can define
the {\em field of rational functions on the variety} $X$
as a field of fractions $Q_{X}$ of the algebra $A_X$.
\par
A polynomial and a rational function for $n$ variables
are denoted respectively by the representations
(assumed to be defined over a finite sum),
\begin{eqnarray*}
	    p(x)
	& = & \sum_{k \in \mathbb{N}^n} c_p(k) \prod_{i=1}^n x_i^{k_i}
	      = \sum_{k \in \mathbb{N}^n} c_p(k) x^k ~\\
        &   & \in \mathbb{R}[X_1, \ldots, X_n], ~
	      (\forall ~ k \in \mathbb{N}^n, ~ c_p(k) \in \mathbb{R}), \\
	    r(x)
	& = & \frac{p(x)}{q(x)}.
\end{eqnarray*}
For a rational function the following special form or canonical form is defined:
(1) there are no common factors in the numerator and the denominator;
such factors when present can be eliminated by cancellation; and
(2) the constant factor in the denominator polynomial, assumed to be present,
is set to one by multiplication of the numerator and the denominator
with a real valued number.
In case there is no constant term in the denominator then
the coefficient of the highest degree in a defined ordering
of the denominator polynomial, 
has to be set to the real number one.
\begin{eqnarray*}
	    r(x)
	& = & \frac{p(x)}{q(x)}, \\
	    q(x)
	& = & 1 + \sum_{k \in \mathbb{N}^n \backslash \{0\}} c_q(k) ~ \prod_{i=1}^n x_i^{k(i)}, \\
	    Q_{X,can}
	& = & \{ r(x) \in Q_X, ~ \mbox{as defined above} \}.
\end{eqnarray*}
\par
The {\em transcendence degree} of a field $F$,
denoted by $\trdeg (F)$,
is defined to be
the greatest number of al\-ge\-bra\-i\-cal\-ly-independent elements 
of $F$ over $\mathbb{R}$,
\cite[Section 7.1, p. 293, p. 304]{becker:weisspfenning:1993} and
\cite[Ch. 2, Sections 3 and 12]{zariski:samuel:1958}.
\par
For the detailed definitions of the concepts introduced below,
a rational system on a variety etc.,
the reader is referred to the papers,
\cite{nemcova:schuppen:2009,nemcova:schuppen:2010:aam,nemcova:petreczky:schuppen:2016:cdc:red}.
This includes the concept of a differential equation on a variety,
see also \cite{yuanwang:sontag:1992:siamjco}, 
and the fact that
the variety is forward invariant with respect to the differential equation.
Controlled invariant hypersurfaces of polynomial control systems on varieties 
are considered in
\cite{zerz:walcher:2012,schilli:zerz:levandovskyy:2014}.
\begin{definition}\label{def:rationalsystem}
	A {\em rational system} on a variety,
	in particular,
	a single-input-single-output-system which is affine in the input,
	is defined as a control system as understood in control theory
	with the representation,
	\begin{eqnarray}
		        dx(t)/dt
		    & = & f_0(x(t)) + f_1(x(t)) u(t), ~ x(0) = x_0, \label{eq:rsystemdx}\\
		        y(t)
		    & = & h(x(t)), \label{eq:rsystemy} \\
		    f_{\alpha}
		    & = & \sum_{i=1}^n [ f_{0,i}(x) + f_{1,i}(x) \alpha ]
		                       \frac{\partial}{\partial x_i}, ~
				       \forall ~ \alpha \in U, \\
		    f
		& = & \{ f_{\alpha}, ~ \alpha \in U\}, \\
		    s
		& = & (X, U, Y, f_0, f_1, h, x_0) \in S_r;
	\end{eqnarray}
	where $n \in \mathbb{Z}_+$,
	$X \subseteq \mathbb{R}^n$ is
	an irreducible nonempty variety called the {\em state set},
	$U \subseteq \mathbb{R}$ is called the {\em input set},
        it is assumed that $\{0\} \subseteq U$ and 
	that $U$ contains at least two distinct elements,
	$Y = \mathbb{R}$ is called the {\em output set},
	$ x_0 \in X$ is called the {\em initial state},
	$f_{0,1}, ~ \ldots, f_{0,n}, ~ f_{1,1}, \ldots f_{1,n} \in Q_X$ and
	$h \in Q_x$ are rational functions on the variety,
	$u: [0,\infty) \rightarrow U$ is a piecewise-constant input function, and
	$S_r$ denotes the set of rational systems as defined here.
        \par	
        One defines the set of piecewise-constant input functions
        which are further restricted by the existence of a solution
        of the rational differential equation of the system.
        Denote for a rational system $s \in S_r$ as defined above,
        the admissible set of piecewise-constant input functions by $U_{pc}(s)$.
        Further, for $u \in U_{pc}(s)$,
        $t_u \in \mathbb{R}_+$ denotes the life time of the solution
	of the differential equation for $x$ with input $u$.
	For any $u \in U_{pc}(s)$ and any $t \in [0,t_u)$
	denote by $u[0,t)$
	the restriction of the function $u$ to the interval $[0,t)$.
	Denote the solution of the differential equation (\ref{eq:rsystemdx}) by
	$x(t; x_0, u[0,t))$, $\forall ~ t \in [0,t_u)$.
        \par
	Define the {\em dimension} of this rational system
	as the transcendence degree of the set of rational functions on $X$,
	$\dim(X) = \trdeg(Q_X)$.
	In the remainder of the paper, a rational system will refer
	to a rational system on a variety.
\end{definition}
\begin{definition}\label{def:responsemap}
	Associate with any rational system in the considered set,
	its {\em response map} as,
	\begin{eqnarray*}
		&   & r_s: U_{pc}(s) \rightarrow \mathbb{R}, ~
		      r_s \in A(U_{pc} \rightarrow \mathbb{R}), ~\\
                &   & \mbox{such that for} ~ e = \mbox{empty input,} ~ r_s(e) = 0,
	\end{eqnarray*}
        and such that for all $u \in U_{pc}(s)$,
	if $x: [0,t_u) \rightarrow X$
	is a solution of the rational system $s$ for $u$,
	i.e. $x$ satisfies the differential equation
	(\ref{eq:rsystemdx}) and the output (\ref{eq:rsystemy}) is well defined,
	then
	\begin{eqnarray*}
		    r_s(u[0,t))
		& = & y(t) = h(x(t; x_0, u[0,t))), ~ \forall ~ t \in [0,t_u).
	\end{eqnarray*}
\end{definition}
The realization problem for the considered set of rational systems is,
when considering an arbitrary response map,
to determine whether there exists a rational systems
whose response map equals the considered response map,
\cite{nemcova:schuppen:2009,nemcova:schuppen:2010:aam}.
\begin{definition}\label{def:realization}
	Consider a response map,
	$p: U_{pc} \rightarrow \mathbb{R}$.
        Call the system $s \in S_r$
	a {\em realization} of the considered map $p$ if
	\begin{eqnarray*}
		&   & \forall ~ u \in U_{pc}, ~
		      \forall ~ t \in [0,t_u), ~
		      p(u[0,t)) = r_s(u[0,t)).
	\end{eqnarray*}
	Call the system a {\em minimal realization} of the response map
	if $\dim (X) = \trdeg (Q_{obs}(p))$.
	Define a {\em minimal rational system}
	to be a rational system which is a minimal realization
	of its own response map.
\end{definition}
In general, a realization is not unique.
Attention will be restricted to minimal realizations
characterized by a condition of controllability and of observability
defined next.
Observability of discrete-time polynomial systems was defined in
\cite{sontag:1979:phdthesis},
observability of continuous-time polynomial systems was defined in
\cite{bartosiewicz:1988}, and
rational observability and algebraic controllability of
rational systems were defined in
\cite{nemcova:schuppen:2010:aam}.
The formal definitions are recalled for ease of reference.
\begin{definition}\label{def:rationalobservability}
	Consider a rational system as defined in Def. \ref{def:rationalsystem}.
	The {\em observation algebra} $A_{obs}(s) \subseteq Q_X$
	of a rational system $s \in S_r$
	is defined as the smallest subalgebra of $Q_X$
	which contains the $\mathbb{R}^1$-valued output map $h$ and
	is closed with respect to taking Lie derivatives
	of the vector field of the system.
	\par
	Denote by $Q_{obs}(s) \subseteq Q_X$
	the field of fractions of $A_{obs}(s)$
	and call this set the {\em observation field} of the system.
	Call the system $s \in S_r$ {\em rationally observable} if
	\begin{eqnarray}
		    Q_X
		& = & Q_{obs}(s).
	\end{eqnarray}
\end{definition}
\begin{definition}\label{def:algebraiccontrollability}
	Consider a rational system as defined in Def. \ref{def:rationalsystem}.
	Call the system {\em algebraically controllable}
	or {\em algebraically reachable} if
	\begin{eqnarray*}
		X
		& = & \zclosure (\{x(t_u) \in X | ~ u \in U_{pc} \}).
	\end{eqnarray*}
	where $\zclosure(S)$ of a set $S \subset \mathbb{R}^n$
	denotes the smallest variety containing the set $S$,
	also called the {\em Zariski-closure} of $S$.
\end{definition}
See \cite{nemcova:schuppen:2017:ifacwc}
for procedures how to check that algebraic controllability holds.
\par
It follows from the existing realization theory
that if a rational system is algebraically controllable
and rationally observable
then it is a minimal realization of its response map,
\cite[Proposition 6]{nemcova:schuppen:2010:aam}.
\par
A minimal realization of a response map is not unique.
It has been proven that
any two minimal rational realizations are birationally equivalent
if the condition holds that the elements of $Q_X \backslash Q_{obs}(s)$
are not algebraic over $Q_{obs}(s)$ for both systems $s$.
Let $X\subseteq \mathbb{R}^n$ and $X'\subseteq \mathbb{R}^{n'}$
be two irreducible varieties.
A {\em birational map} from $X$ to $X'$
is a map which has $n'$ components which are all rational functions of $Q_X$ and
for which an inverse exists such that
it is a map which has $n$ components which are all rational functions of $Q_{X'}$.
A birational map transforms a rational system on a variety
to another rational system on another variety, see \cite{nemcova:schuppen:2010:aam}.
A reference on birational geometry is
\cite{kollar:mori:1998}.
%
\section{CANONICAL FORMS}\label{sec:canonicalforms}
A canonical form is defined
for the case one has a set with an equivalence relation defined on it.
The terms of normal form, canonical form, or canonical normal form,
all refer to the same concept.
The authors prefer the expression {\em canonical form}.
The following books define the concept of canonical form,
\cite[p. 277]{birkhoff:maclane:1977},
\cite[Section 0.3]{jacobson:1985},
\cite[Subsection 2.2.1]{wechler:1992}, and
\cite[Section 4.5 Reduction Relations]{becker:weisspfenning:1993}.
\begin{definition}\label{def:canonicalform}
Consider a set $X$ and
an equivalence relation $E \subseteq X \times X$ defined on it.
A {\em canonical form} or a {\em normal form}
for this set and this equivalence relation,
consists of a subset $X_c \subseteq X$
such that, for any $x \in X$,
there exists a unique element of the canonical form $x_c \in X_c$,
such that $(x,x_c) \in E$.
\end{definition}
A canonical form is nonunique in general.
One may impose conditions on the canonical form
if there is an algebraic structure on the underlying set.
\par
In system theory,
a canonical form is needed for the realization of a response map.
Realization theory provides a condition
for the existence of a realization of a response map.
One then restricts attention to the subset of minimal systems;
equivalently, those systems which are minimal realizations of their own response map.
One then defines an equivalence relation on the set of minimal systems
if the response maps of the two considered miminal systems are equal.
\par
In system theory there have been defined canonical forms
for two different equivalence relations defined next:
\begin{enumerate}
\item
response-map equivalence; or
\item
feedback-and-response-map equivalence.
\end{enumerate}
\begin{definition}\label{def:equivalenceresponsemap}
	Consider the set $S_{r,\min}$ of minimal rational systems on a variety $X$.
	Define the {\em response-map equivalence relation} $E_{rm}$
	on this set of systems by the condition
	that two systems are equivalent if their response maps are equal.
	One then says that the considered two systems
	are {\em response-map equivalent}.
\end{definition}
\begin{definition}
	Consider the set $S_{r,\min}$ of minimal rational systems on a variety $X$.
	Define the {\em feedback-and-response-map equivalence relation} $E_{frm}$
	on this set of systems by the condition
	that two systems are response-map equivalent and state-feedback equivalent.
	System 1 and System 2 are called {\em state-feedback equivalent}
	if System 1 is
	response-map equivalent with System 2 after
	closing the loop with a state-feedback and a new input variable $v$ according to,
	\begin{eqnarray}
		    u
		& = & g_0(x) + g_1(x) v, ~ g_0, ~ g_1 \in Q_X, \\
		    dx(t)/dt
		& = & [ f_0(x(t)) + f_1(x(t)) g_0(x(t)) ] + \nonumber \\
		&   & + f_1(x(t)) g_1(x(t)) v(t),\\
		    y(t)
		& = & h(x(t),
	\end{eqnarray}
	One then says that the considered two systems
	are {\em feedback-and-response-map equivalent}.
\end{definition}
Feedback equivalence is best considered in combination
with the control canonical form.
For an observable canonical form
one may define an observer-feedback equivalence relation
not further discussed in this paper.
\begin{problem}\label{problem:rsystemcanonicalform2}
	Consider the set $S_{r,\min}$ of rational systems
	each of which is single-input-single-output and affine in the input, and
	and each of which is a minimal realization of its response map.
	The problem is to define a {\em canonical form}
	for (1) the response-map equivalence and
	for (2) the feedback-and-response-map equivalence.
	Prove that each of the defined forms is a well-defined canonical form.
\end{problem}
In this paper only a canonical form for the first equivalence relation
is provided.
The solution for the second equivalence relation is postponed.
\par
For a set of systems and
for the response-map equivalence relation
there is no unique canonical form.
For time-invariant finite-dimensional linear systems
there have been defined
both a control canonical form and an observable canonical form.
Which of the many canonical form is most appropriate depends on other objectives
of the user.
\par
The formulation of a canonical form for the set of minimal rational systems
as specified above
has to involve:
(1) the variety of the state set $X$ and
(2) the functions specifying the minimal rational system,
both the dynamics and the output map.
This makes the problem different from that of
a canonical form for a time-invariant linear system
which are defined on the state-space $\mathbb{R}^n$
and by the functions of the dynamics and the output equation.
In the linear case there is no restriction on the state space except its dimension.
In the book
\cite[Sections 27 and 28]{sontag:1979:phdthesis}
there is a discussion of these two cases
for discrete-time polynomial systems.
\par
Due to the above remark, there are two types of canonical forms for rational systems:
\begin{enumerate}
	\item
		with a structured rational systems and an unrestricted arbitrary variety; and
	\item
		with a variety of a given structure and
		with un unstructured or partly-structured rational system.
\end{enumerate}
\par
One may also define a canonical form for the description
of the variety of the state set.
The classification of algebraic varieties up to a birational equivalence
is the main problem studied within birational geometry,
see \cite{kollar:mori:1998}.
It is proven that every $n$-dimensional irreducible variety
over an algebraically closed field
is birationally equivalent to a hypersurface in $\mathbb{R}^{n+1}$.
Hence, good candidates for
a canonical form for the description of the variety of the state set
are hypersurfaces in $\mathbb{R}^{n+1}$.
Note that a hypersurface is given by a homogeneous polynomial,
a polynomial whose nonzero terms all have the same degree.
\begin{definition}\label{def:rsystemobscanformsiso}
Consider the set of rational systems as defined in Def. \ref{def:rationalsystem}.
Assume that the class of systems is restricted from $S_r$ to $S_{rr}$ 
so that for any system $s$ in the considered class $S_{rr}$,
$Q_X \backslash Q_{obs}(s)$ is not algebraic over $Q_{obs}(s)$.
\par
Define the {\em observable canonical form}
on the set of minimal rational systems $S_{rr}$
for the response-map equivalence relation
as the algebraic structure described by the equations,
\begin{eqnarray}
      X
  & = & \{ x \in \mathbb{R}^n | 0 = p_i(x), ~ \forall ~ i \in \mathbb{Z}_k \},
         \nonumber\\
  &   & \mbox{an irreducible nonempty variety,} \nonumber \\
      d
  & = & \trdeg (Q_{obs}(s)) \in \mathbb{Z}_+, ~
        n, ~ k \in \mathbb{Z}_+, \nonumber \\
      dx_1(t)/dt
  & = & x_2(t) + f_{1}(x(t)) ~ u(t), 
\end{eqnarray}
\begin{eqnarray}
      dx_i(t)/dt
  & = & x_{i+1}(t) + f_{i}(x(t)) ~ u(t), ~  \\
  &   & i = 2, 3, \ldots, n-1, \nonumber \\
      dx_n(t)/dt
  & = & f_{n,0}(x(t)) + f_{n,1}(x(t)) ~ u(t), \\
      y(t)
  & = & x_1(t), ~~  \\
	&   & f_{1}, \ldots, f_{n-1}, ~ f_{n,0}, ~ f_{n,1} \in Q_{X, can}, \nonumber \\
  &   & (\forall ~ x \in X \backslash A_e, ~
	   f_{n,1}(x) \neq 0); \nonumber  \\
  &   & \mbox{where} ~ A_e \subset X ~ \mbox{is an algebraic set,}
	  \nonumber  \\
      S_{rr,ocf}
	& \subset & S_{rr}, ~
	\mbox{denotes the subset of rational systems} \nonumber \\
  &   & \mbox{in the observable canonical form.} \nonumber
\end{eqnarray}
Every system of the observable canonical form
is assumed to be algebraically controllable.
\end{definition}
\par
The assumption algebraic controllability in the above definition
of an observable canonical form is necessary due to the focus on observability.
Even in the case of the obervable canonical form of a time-invariant minimal linear system,
the condition of controllability has to be imposed.
It is conjectured that the condition that
$(\forall ~ x \in X \backslash A_e, ~ f_{n,1}(x) \neq 0)$
is a necessary condition for algebraic controllability.
The reader finds in the paper \cite{nemcova:schuppen:2017:ifacwc}
several ways to calculate whether or not a rational system
is algebraically controllable.
\par
The authors are aware of the control canonical form of nonlinear systems
on manifolds of \cite{tall:respondek:2003}.
In that canonical form,
one extracts from the nonlinear functions
linear terms in the state-input-pair and
a quadratic term in the state multiplying
a series representation of rational functions of homogeneous degrees.
The authors of this paper have decided not to use that particular
canonical form in this paper.
\par
To show that the above defined observable canonical form
is a well-defined canonical form
it has to be proven that:
\begin{enumerate}
	\item
		every rational system in the observable canonical form
		is a minimal realization of its response map;
	\item
		for every rational system which is a minimal realization
		of its response map,
		there exists a rational system in the observable canonical form
		such that the two systems are response-map equivalent;
	\item
		if two rational systems in the observable canonical form
		are response-map equivalent
		then they are identical.
\end{enumerate}
The proofs of the above items are provided in Section \ref{sec:theory}.
\par
There follow the special cases
of rational systems in the observable canonical form
for systems with the state-space dimensions one and two.
\begin{example}\label{ex:rsystemocf1n1}
	Consider the rational system
	with state-space dimension one, ($n=1$).
	\begin{eqnarray}
		    X
		& = & \{ x \in \mathbb{R}^1 | 0 = p_1(x) = \ldots = p_k(x) \}, ~
		      k \in \mathbb{Z}_+, \nonumber \\
		    dx(t)/dt
		& = & f_0(x(t)) + f_1(x(t)) ~ u(t), ~ x(0) = x_0, \\
		    y(t)
		& = & x(t), ~ \\
		&   & f_0, ~ f_1 \in Q_{X,can}, ~
		      (\forall ~ x \in X \backslash A_e, ~
		      f_1(x) \neq 0), ~ \nonumber \\
		&   & A_e \subset A_X ~ \mbox{an algebraic set,} \nonumber
	\end{eqnarray}
	Assume in addition that the system is algebraically controllable.
        Then this particular system is in the observable canonical form.
	The state set of this system is an irreducible variety in $\mathbb{R}^1$.
	An irreducible variety in $\mathbb{R}^1$ is either a singleton or all of $\mathbb{R}^1$.
\end{example}
%
%
\begin{example}\label{ex:rsystemocf1n2}
	Consider the rational system with state-space dimension two, $n=2$.
	\begin{eqnarray}
		    X
		& = & \{ x \in \mathbb{R}^2 | 0 = p_1(x) = \ldots p_k(x) \}, ~
		      k \in \mathbb{Z}_+, \nonumber  \\
		    \frac{dx_1(t)}{dt}
		& = & x_2(t) + f_{1}(x_1(t)) ~ u(t), \\
		    \frac{dx_2(t)}{dt}
		& = & f_{2,0}(x(t)) + f_{2,1}(x(t)) ~ u(t), ~  \\
		    y(t)
		& = & x_1(t), ~ \\
		&   & f_1, ~ f_{2,0}, ~ f_{2,1} \in Q_{X, can}, \nonumber \\
		&   & (\forall (x_1, x_2) \in X \backslash A_e,  ~
		      f_{2,1}((x_1,x_2)) \neq 0
                      ). \nonumber
	\end{eqnarray}
	Assume in addition that the system is algebraically controllable.
        Then this system is in the observable canonical form.
\end{example}
\begin{example}\label{ex:rsystemocfn2specific}
	Consider the specific rational system
	with state-space dimension two, $n=2$.
	\begin{eqnarray}
		    X
		& = & \{ x \in \mathbb{R}^2 | 0 = p_1(x) = \ldots = p_k(x) \}, ~
		      k \in \mathbb{Z}_+, \nonumber \\
		    \frac{dx_1(t)}{dt}
		& = & x_2(t), \\
		    \frac{dx_2(t)}{dt}
		& = & - \frac{x_2(t)}{1 + x_1(t)^2} ~ u(t), ~ \\
		    y(t)
		& = & x_1(t).
	\end{eqnarray}
	Assume in addition that the system is algebraically controllable.
        Then this particular system is in the observable canonical form.
\end{example}
\begin{definition}\label{def:observabilityindex}
	Consider a minimal rational system.
	Define the
	{\em observability index with respect to response-map equivalence}
	of this system, as the integer,
	\begin{eqnarray}
		n_o
		& = & \min_{k \in \mathbb{Z}_+, ~ Q_{obs}(s) = Q(G_k)} ~ |G_k|, \\
		    G_k
		& = & \left\{
	              \begin{array}{l}
			      h, L_{f_{\alpha_1}} h, ~ L_{f_{\alpha_2}} L_{f_{\alpha_1}} h,
			      \ldots \\
			      L_{f_{\alpha_k}} L_{f_{\alpha_{k-1}}} \ldots L_{f_{\alpha_1}} h, \\
			      \forall ~ \alpha_1, \ldots, \alpha_k \in U
	              \end{array}
		      \right\}, \\
		    L_{f_{\alpha}}
		& = & \sum_{i=1}^n [ f_0(x) + f_1(x) \alpha ]
		                   \frac{\partial}{\partial x_i}.
	\end{eqnarray}
	In words, the observability index is the minimal number
	of elements of the set $G_k$
	consisting of the output map $h$ and its Lie derivatives upto order $k-1$
	such that this set is a generator set of the observation field
	$Q_{obs}(s)$ of the system.
	Here $|G_k|$ denotes the number of elements in the set $G_k$.
\end{definition}
The dimension of an irreducible variety $X$ is defined as its Krull dimension and 
coincides with the transcendence degree of its function field $Q_X$. 
Hence, the dimension of the state set $X \subseteq \mathbb{R}^n$ 
of a rational system $s$ as well as the transcendence basis of $Q_X $ 
is finite ($\leq n$). 
Recall that the transcendence degree is defined as the smallest number
of generators of a field which are algebraically independent.
If the system $s$ is rationally observable, 
such that $Q_X = Q_{obs}(s)$, 
the generators of $Q_X$ can be chosen from the set 
$\{h, L_{f_{\alpha_1}} h, \ldots, L_{f_{\alpha_k}} \ldots L_{f_{\alpha_1}} h ~|~ 
\forall ~ \alpha_1, \ldots, \alpha_k \in U, k \in \mathbb{N}\}$. 
Moreover, 
the finiteness of a generator set of $Q_X$ 
implies that only finitely many Lie derivatives from $Q_{obs}(s)$ 
are sufficient to derive the generators of $Q_X$. 
Hence, the observability index $n_o$ defined in 
Definition \ref{def:observabilityindex} is a finite integer. 
Depending on the dimension of the output set and 
on the structure of $h$ and $f_{\alpha}$'s, 
the value of $n_o$ can be smaller, equal, or greater than $n$.
Below an example will be presented where $n < n_o$.
Observability indices can also be defined
for multi-input multi-output rational systems,
along the lines of
\cite{popov:1972,wonham:morse:1972} for linear systems.
%

\begin{example}\label{ex:obsindexhigherthann}
	Consider the polynomial system $s$,
	\begin{eqnarray*}
		X
		& = & \mathbb{R}^2, \\
		    dx(t)/dt
		& = & \left(
		      \begin{array}{l}
			      x_1(t)-x_2^2(t)+x_2(t) \\
			      x_2(t)
		      \end{array}
		      \right), ~ x(0) = x_0, \\
		    y(t)
		& = & x_1(t) = h(x(t)).
	\end{eqnarray*}
	Calculations then yield that,
	\begin{eqnarray*}
		b_1(x)
		& = & h(x) = x_1, \\
        Q_X & \neq & Q(\{b_1\}); \\
		    b_2(x)
		& = & L_f h(x) =  x_1 - x_2^2 + x_2, \\
        Q_X & \neq & Q(\{b_1,b_2\}); \\
		    b_3(x)
		& = & (L_f)^2 h =
		      x_1 -3 x_2^2 + 2 x_2, \\
		Q_X & = & Q(\{ b_1, b_2, b_3\}) = Q_{obs}(s); \\
		    n_0
		& = & 3 > 2 = n.
	\end{eqnarray*}
	The conclusion is that the observability index can be strictly
	higher than the state-space dimension of a rational system.
\end{example}
\begin{definition}\label{def:varietystructuredcanonicalformsiso}
Consider the set of rational systems
each of which is single-input-single-output and affine in the input, and
each of which is a minimal realization of its response map.
Define the
{\em variety-structured canonical form of such a rational system}
for the response-map equivalence relation
as the algebraic structure described by the equations,
\begin{eqnarray}
      X
  & = & \mathbb{R}^n, ~
	\mbox{or a hypersurface of $\mathbb{R}^{n+1}$,} \nonumber \\
      dx(t)/dt
  & = & f_0(x(t)) + f_1(x(t)) ~ u(t), ~
          x(0) = x_{0},  \\
      y(t)
  & = & h(x(t)), ~~ \\
  &   & f_0, ~ f_1 \in Q_{X,can}, ~ h \in Q_{X,can}. \nonumber
\end{eqnarray}
It could be that the functions $f$ and $h$ are partly structured
for which in each case a particular birational equivalence
form has to be formulated.
\end{definition}
\par
A canonical form of the set of minimal rational system
could also be formulated in an algebraic way by a subfield of the observation field.
A rational system is basically equivalent with its observation field.
It follows from \cite[Th. 6.1]{nemcova:schuppen:2009}
that for a response map there exists a rationally-observable realization
if and only if
the observation field of the response map has a finite transcedence degree.
Equivalently,
if for the observation field of the response map
there exists a finite set of generators.
A rationally-observable rational system realizing the response map
is therefore completely described by the observation field of its response map.
Because the observation field of the response map and the observation
field $Q_{obs}(s)$ of the rational system $s$ are isomorphic,
the above condition on the observation field of the response map
can be rewritten in terms of the observation field of the systems, $Q_{obs}(s)$.
This relation allows one then to formulate a canonical form
in terms of the observation field of a system
rather than in terms of the rational system representation.
\par
The algebraic formulation of an observable canonical form follows.
\begin{definition}
Consider the set of rational systems
each of which is single-input-single-output and affine in the input, and
each of which is a minimal realizations of its response map.
Define the {\em sequence of canonical observation subfields} of this system
on the basis of the observable canonical form by the equations,
	\begin{eqnarray}
		&   & \{ Q_{X, i} \subset Q_{obs}(s), ~ \forall ~ i \in \mathbb{Z}_n \},
		      \nonumber \\
		    Q_{X, i}
		& = & \{ r \in Q_{obs}(s) \subseteq Q_X | X_{i+1} = 0, \ldots, X_n = 0 \}, ~\\
		&   & \forall ~ i \in \mathbb{Z}_d, \nonumber \\
		    Q_{X, d}
		& = & Q_{obs}(s).
	\end{eqnarray}
\end{definition}
\par
This algebraic formulation of the observable canonical form
allows also an extension
to controlled-invariant observation subfields by state feedback.
This is then the analogon for rational systems
of the concept of a {\em dynamic cover} defined in \cite{wonham:morse:1972}.
This topic will be addressed in a future publication.
%
\section{THE THEOREM}\label{sec:theory}
\par
The reader finds in this section a proof that
the observable canonical form is a well-defined canonical form.
\begin{proposition}\label{proposition:obscanformminimalrealization}
	Consider a rational system
	in the observable canonical form of Def. \ref{def:rsystemobscanformsiso}.
	This system is:
	\begin{itemize}
		\item[(a)]
			rationally observable; and
		\item[(b)]
			a minimal realization of its response map.
	\end{itemize}
\end{proposition}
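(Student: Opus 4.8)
The plan is to establish the two claims in sequence, deriving (b) from (a) together with the algebraic controllability that is assumed for every system in Def. \ref{def:rsystemobscanformsiso}. The conceptual core is a short computation showing that the observation algebra $A_{obs}(s)$ recovers all coordinate functions of the state variety. First I would prove part (a). The output map is $h = x_1$. Since $\{0\} \subseteq U$, the drift vector field $f_0 = f_{\alpha}|_{\alpha=0}$ is itself one of the admissible vector fields of the family $f = \{f_{\alpha}, \alpha \in U\}$, so $A_{obs}(s)$ is closed under the Lie derivative $L_{f_0}$. Using the cascade structure $dx_i/dt = x_{i+1} + f_i(x) u$, whose drift part is $x_{i+1}$, I would show by induction that
\[
	L_{f_0}^{\,i-1} h = x_i, \quad i = 1, 2, \ldots, n,
\]
so that $x_1, \ldots, x_n \in A_{obs}(s)$.

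Because $A_X = \mathbb{R}[X_1, \ldots, X_n] / I(X)$ is generated as an $\mathbb{R}$-algebra by the coordinate functions $x_1, \ldots, x_n$, this gives $A_X \subseteq A_{obs}(s) \subseteq Q_X$. Passing to fields of fractions yields $Q_X = \mathrm{Frac}(A_X) \subseteq Q_{obs}(s) \subseteq Q_X$, whence $Q_{obs}(s) = Q_X$, which is exactly the definition of rational observability in Def. \ref{def:rationalobservability}. This settles (a).

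For part (b), I would combine the rational observability just established with the standing assumption of algebraic controllability and invoke the realization result quoted in the excerpt, \cite[Proposition 6]{nemcova:schuppen:2010:aam}: an algebraically controllable and rationally observable rational system is a minimal realization of its response map. Alternatively, one can argue straight from Def. \ref{def:realization}: rational observability gives $\trdeg(Q_{obs}(s)) = \trdeg(Q_X) = \dim(X)$, and since the observation field of the response map is isomorphic to $Q_{obs}(s)$, the minimality condition $\dim(X) = \trdeg(Q_{obs}(p))$ holds.

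The work here is essentially bookkeeping rather than conceptual, so the main obstacle is just to present the cascade cleanly and to use each hypothesis at the right place. One must note explicitly that the hypothesis $\{0\} \subseteq U$ is what makes the pure drift $f_0$ admissible, so that $L_{f_0}$ may be applied; the input-multiplying terms $f_i(x) u$ and the final row $dx_n/dt = f_{n,0}(x) + f_{n,1}(x) u$ play no role in the observability argument and need not be touched. The only point in (b) deserving care is the identification of the observation field of the response map with $Q_{obs}(s)$, which is standard in this realization theory and is the reason the transcendence-degree condition of Def. \ref{def:realization} transfers to the response map.
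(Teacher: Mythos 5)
Your proof is correct, but part (a) takes a genuinely different --- and in fact simpler --- route than the paper's. The paper proves (a) only for $n=2$ (asserting the general case is easily deduced) and never uses the drift field alone: it computes $L_{f_\alpha} h = x_2 + f_1(x)\alpha$ for all $\alpha \in U$, picks two distinct values $\alpha_1 \neq \alpha_2$ (using the assumption that $U$ contains at least two elements), and subtracts to isolate first $f_1$ and then $x_2$. You instead exploit $\{0\} \subseteq U$ to work with the single vector field $f_0$, obtaining $L_{f_0}^{\,i-1} h = x_i$ for $i = 1, \ldots, n$ directly from the cascade structure, whence $A_X \subseteq A_{obs}(s)$ and $Q_{obs}(s) = Q_X$. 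Each approach consumes a different half of the standing hypothesis on $U$: yours needs $0 \in U$ but handles arbitrary $n$ uniformly with no cancellation trick, while the paper's needs only that $U$ has two distinct elements and so would survive dropping the assumption $0 \in U$. Notably, your argument refutes the remark the paper appends to its own proof, namely that ``the input has to be varied to make the system rationally observable'' and that with only a constant input the system is not rationally observable: under the standing assumption $0 \in U$, the constant input $0$ alone already generates $Q_X$ for systems in this canonical form. For part (b) your primary argument is exactly the paper's (rational observability plus the assumed algebraic controllability plus the cited Proposition 6); your ``alternative'' argument via the isomorphism between the observation field of the response map and $Q_{obs}(s)$ is the one place requiring care, since that isomorphism is itself a consequence of algebraic controllability --- it is not a controllability-free shortcut, and observability alone would not give minimality.
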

\begin{proof}
	(a) The proof is provided for the case $n=2$ from which the general case
	is easily deduced.
	The rational system in the observable canonical form is
	represented by the equations,
	\begin{eqnarray*}
                    dx_1(t)/dt
		& = & x_2(t) + f_1(x(t)) u(t), \\
		    dx_2(t)/dt
		& = & f_{2,0}(x(t)) + f_{2,1}(x(t)) u(t), \\
		    y(t)
		& = & x_1(t) = h(x(t)).
	\end{eqnarray*}
        The observation field is calculated according to,
	\begin{eqnarray*}
		    L_{f_{\alpha}}
		& = & [ x_2 + f_1(x) \alpha ]
		        \frac{\partial }{\partial x_1}
		      + [ f_{2,0}(x) + f_{2,1}(x) \alpha ]
		        \frac{\partial }{\partial x_2} ,  \\
		    Q_{obs}(s)
		& = & Q
		      \left(
		        \left\{
                        \begin{array}{l}
		           h,
				L_{f_{\alpha_k}} \ldots L_{f_{\alpha_1}} h, \\
                           \forall ~ \alpha_1, \ldots, \alpha_k \in U, ~
                           \forall ~ k \in \mathbb{Z}_+
			\end{array}
                        \right\}
                      \right), \\
		&   & h(x) = x_1 ~
                      \Rightarrow ~
                      x_1 \in Q_{obs}(s); ~
		      \forall ~ \alpha \in U, \\
		    L_{f_{\alpha}} h(x)
		& = & L_{f_{\alpha}} x_1 = [ x_2 + f_1(x) \alpha ]
		                  \frac{\partial x_1}{\partial x_1} + 0\\
		& = & x_2 + f_1(x) \alpha \in Q_{obs}(s); ~
                      \mbox{by assumption on $U$} ~ \\
                &   & \exists ~ \alpha_1, ~ \alpha_2 \in U, ~
                      \alpha_1 \neq \alpha_2, \\
		&   & x_2 + f_1(x) \alpha_1, ~ x_2 + f_1(x) \alpha_2
                      \in Q_{obs}(s), \\
		& \Rightarrow & f_1(x) (\alpha_1 - \alpha_2) \in Q_{obs}(s), \\
		& \Rightarrow & f_1(x) \in Q_{obs}(s), ~\\
                    x_2
		& = & [ x_2 + f_1(x) \alpha ] - f_1(x) \alpha \in Q_{obs}(s); \\
		    Q_X
		& = & Q(\{x_1, x_2\}) \subseteq Q_{obs}(s) \subseteq Q_X, \\
		    Q_X
		& = & Q_{obs}(s),
        \end{eqnarray*}
	hence the system is rationally observable.
        The proof shows that the input has to be varied to make the system
        rationally observable.
        With only a constant input, the system is not rationally observable.\\
	(b) The conclusion follows from (a), 
	the assumption of algebraic controllability 
	of Def. \ref{def:rsystemobscanformsiso}, and
	\cite[Proposition 6]{nemcova:schuppen:2010:aam}.
\end{proof}
\begin{proposition}\label{proposition:obscanformuniqueness}
        If two rational systems on a variety are
	both in the observable canonical form,
	of the same state-space dimension, and
        response-map equivalent and hence birationally equivalent,
        then they are identical.
\end{proposition}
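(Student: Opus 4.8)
The plan is to exploit the rigidity of the observable canonical form: its state coordinates are intrinsically determined by the response map. First I would observe that, since $0 \in U$, one may set $u \equiv 0$ and read off from Def.~\ref{def:rsystemobscanformsiso} that the drift dynamics satisfy $dx_i/dt = x_{i+1}$ for $i = 1, \ldots, n-1$ while $y = x_1 = h$. Consequently the coordinate functions are the iterated drift-Lie derivatives of the output,
\begin{eqnarray*}
	x_i & = & L_{f_0}^{i-1} h, \quad i = 1, \ldots, n,
\end{eqnarray*}
all lying in $Q_{obs}(s)$. Thus each $x_i$ is built canonically from the two data $h$ and $f_0$, which are themselves determined by the response map; here the assumption that $U$ contains $0$ and at least two distinct elements is used, exactly as in the proof of Prop.~\ref{proposition:obscanformminimalrealization}(a), to separate the drift $f_0$ from the input vector field $f_1$ inside the family $f_\alpha = f_0 + \alpha f_1$.

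Next I would invoke the hypothesis that the two systems, being response-map equivalent minimal realizations satisfying the non-algebraicity condition of Def.~\ref{def:rsystemobscanformsiso}, are birationally equivalent via a birational map $\phi = (\phi_1, \ldots, \phi_n) : X \to X'$ in the sense of \cite{nemcova:schuppen:2010:aam}. The content I need is that $\phi$ intertwines the two systems: the outputs correspond, $x_1' \circ \phi = x_1$, and the drift vector fields correspond, $\phi_* f_0 = f_0'$, so that $L_{f_0}(g \circ \phi) = (L_{f_0'} g) \circ \phi$ for every $g \in Q_{X'}$.

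I would then prove $\phi = \mathrm{id}$ by induction on the coordinate index. The base case is $\phi_1 = x_1' \circ \phi = x_1$. For the inductive step, using the naturality of the Lie derivative under $\phi$ together with the coordinate formula of the first paragraph applied to both systems,
\begin{eqnarray*}
	\phi_{i+1} & = & x_{i+1}' \circ \phi = (L_{f_0'} x_i') \circ \phi = L_{f_0}(x_i' \circ \phi) = L_{f_0}\phi_i = L_{f_0} x_i = x_{i+1}.
\end{eqnarray*}
Hence $\phi_i = x_i$ for all $i$, so $\phi$ is the identity as a rational map on $\mathbb{R}^n$, which forces $X = X'$ as subsets of $\mathbb{R}^n$. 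Finally, with $\phi = \mathrm{id}$ the intertwining of the drift and of the input vector field yields $f_i = f_i'$ for $i = 1, \ldots, n-1$ and $f_{n,0} = f_{n,0}'$, $f_{n,1} = f_{n,1}'$ as elements of $Q_X = Q_{X'}$; since both sides are written in the canonical form $Q_{X,can}$, in which the representative of each class is unique, the defining functions are literally identical, and the two systems coincide.

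I expect the main obstacle to be the precise justification of the second paragraph, namely that the birational equivalence supplied by the realization theory intertwines the \emph{drift} field $f_0$ separately (and likewise the input field $f_1$), and not merely the combined input-dependent dynamics. Establishing that $L_{f_0}$ is transported correctly by $\phi$ is exactly what makes the inductive identification of coordinates succeed, and it relies on the way birational maps transform rational systems in \cite{nemcova:schuppen:2010:aam} together with the separation-of-inputs argument already used in Prop.~\ref{proposition:obscanformminimalrealization}(a).
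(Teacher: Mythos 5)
Your proposal is correct, and it rests on the same two pillars as the paper's own proof: the birational equivalence of response-map-equivalent minimal realizations (Theorem 8 of the cited realization paper, applied to systems in the canonical form), and the use of the structure of the input set $U$ to separate the drift from the input vector field. The organization of the induction, however, is genuinely different. The paper matches the intertwined dynamics for an arbitrary constant input $\alpha$ coordinate by coordinate: starting from $x_1 = \overline{x}_1$, it equates $x_2 + f_1(x)\alpha$ with $\overline{x}_2 + \overline{f}_1(\overline{x})\alpha$ and evaluates at two distinct values $\alpha_1 \neq \alpha_2$ to conclude \emph{simultaneously} that $\overline{x}_2 = x_2$ and $\overline{f}_1 = f_1$, carrying this out for $n=2$ and deferring the general case to induction. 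You instead set $u \equiv 0$ (using $0 \in U$) to isolate the drift, observe that the canonical form forces $x_i = L_{f_0}^{i-1}h$, and prove $\phi = \mathrm{id}$ by an induction that uses only drift intertwining and naturality of the Lie derivative; equality of the input vector fields is recovered afterwards by linearity in $\alpha$. What your version buys: the identification of the state map is decoupled from the identification of the vector fields, the general-$n$ induction is written out explicitly rather than deferred, and the appeal to uniqueness of representatives in $Q_{X,can}$ makes explicit the step from equality of rational functions to identity of the written representations, which the paper leaves implicit. The one point you rightly flag as the obstacle---that the birational map intertwines $f_\alpha$ with $f'_\alpha$ for each fixed $\alpha \in U$ separately, not merely the input-dependent flow as a whole---is exactly the fact the paper itself uses when it equates $dx_1(t)/dt$ and $d\overline{x}_1(t)/dt$ under the substitution $\overline{x} = b(x)$, so your argument stands on the same footing as the paper's; given that fact, the separation into $f_0$ and $f_1$ follows from $0 \in U$ and the existence of a second input value, just as in the proof of Proposition \ref{proposition:obscanformminimalrealization}(a).
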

\begin{proof}
	It follows from \cite[Th. 8]{nemcova:schuppen:2010:aam}
	that if two rational systems have the same response map
	and if they are both rationally observable and algebraically controllable
	then they are birationally related.
	\par
	The proof will be provided for the case of dimension $n=2$
	from which the general case follows by induction.
	Consider System 1 with state $x$ and System 2 with state $\overline{x}$
	both in the observable canonical form with the same state-set dimension.
	Assume that the systems are birationally related
	by the map $b: X \rightarrow \overline{X}$ hence
	$\overline{x}_1 = b_1(x_1,x_2)$ and $\overline{x}_2 = b_2(x_1,x_2)$.
	Then it follows from the observable canonical form for both systems that,
	\begin{eqnarray*}
		&   & x_1 = y = \overline{x}_1 = b_1 (x_1, x_2);\\
		    d \overline{x}_1(t)/dt
		& = & \overline{x}_2
		      + \overline{f}_{1}(\overline{x}) \alpha
		      = dx_1(t)/dt = x_2 + f_{1}(x) \alpha; \\
                    \overline{x}_2 - x_2
                & = & [ f_1(x) - \overline{f}_1(\overline{x}) ] \alpha; ~
                      \mbox{by assumption on $U$} ~\\
                &   & \exists ~ \alpha_1, ~ \alpha_2 \in U, ~
                      \alpha_1 \neq \alpha_2, \\
                    \overline{x}_2 - x_2
                & = & [ f_1(x) - \overline{f}_1(\overline{x}) ] \alpha_1
                      = [ f_1(x) - \overline{f}_1(\overline{x}) ] \alpha_2, \\
                    0
                & = & (\alpha_1 - \alpha_2) [ f_1(x) - \overline{f}_1(x) ], ~\\
                &   & \mbox{and, by} ~ (\alpha_1 - \alpha_2) \neq 0,\\
                    0
                & = & [ f_1(x) - \overline{f}_1(\overline{x}) ] ~
                      \Rightarrow ~ \overline{x}_2 - x_2 = 0, \\
		    x_2
		& = & \overline{x}_2 = b_2(x_1,x_2) ~
                      \Rightarrow ~ b(x) = x.
	\end{eqnarray*}
\end{proof}
\begin{proposition}\label{proposition:transformtoobscanform}
	Any minimal rational system $s$
	such that $Q_X \setminus Q_{obs}(s)$ is not algebraic over $Q_{obs}(s)$,
	can be transformed by a birational map
	to a rational system in the observable canonical form
	of Def. \ref{def:rsystemobscanformsiso}
	such that both systems are response-map equivalent.
\end{proposition}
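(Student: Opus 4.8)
The plan is to construct the birational map directly from iterated Lie derivatives of the output along the drift field $f_0$, and then to verify that in the new coordinates the system acquires the drift-chain structure of Definition~\ref{def:rsystemobscanformsiso}. I would first note that the hypotheses force rational observability: minimality gives $\trdeg Q_{obs}(s) = \trdeg Q_X$, so $Q_X$ is algebraic over $Q_{obs}(s)$, and together with the assumption that $Q_X \setminus Q_{obs}(s)$ contains no element algebraic over $Q_{obs}(s)$ this leaves $Q_X = Q_{obs}(s)$. Hence $h$ and its iterated Lie derivatives generate $Q_X$.

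Next I would introduce the candidate coordinates $z_1 = h$ and $z_{i+1} = L_{f_0} z_i = L_{f_0}^{\,i} h$, and let $n$ be the least integer for which $z_1, \ldots, z_n$ generate $Q_X$ over $\mathbb{R}$. Setting $z = (z_1, \ldots, z_n)$ and $\overline{X} = \zclosure(z(X)) \subseteq \mathbb{R}^n$, the fact that the $z_i$ lie in and generate $Q_X$ makes the pullback $z^{\ast} : Q_{\overline{X}} \to Q_X$ an isomorphism, so $z$ is birational onto its image; irreducibility of $\overline{X}$ follows from that of $X$.

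I would then compute the dynamics. For $i < n$ one has $dz_i/dt = L_{f_0} z_i + (L_{f_1} z_i)\,u = z_{i+1} + f_i(z)\,u$ with $f_i = L_{f_1} z_i$, while $dz_n/dt = L_{f_0}^{\,n} h + (L_{f_1} z_n)\,u = f_{n,0}(z) + f_{n,1}(z)\,u$, and $y = z_1 = h$. All coefficient functions lie in $Q_{\overline{X}}$ and are normalized into $Q_{\overline{X}, can}$; this is exactly the observable canonical form. Since the coordinate change is birational the transformed system has the same response map as $s$, so the two are response-map equivalent; algebraic controllability is inherited, and from it the condition $f_{n,1} \neq 0$ off an algebraic set $A_e$ follows.

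The hard part will be guaranteeing a finite $n$ for which the pure drift derivatives $h, L_{f_0} h, \ldots, L_{f_0}^{\,n-1} h$ already generate $Q_X$. Rational observability only supplies generation by Lie derivatives along the whole input-parametrized family $\{ f_{\alpha} \}$, which is in general strictly richer than the drift chain; ensuring that no input-direction derivative is needed among the generators is the delicate step, and is where the non-algebraicity hypothesis on $Q_X \setminus Q_{obs}(s)$ must be brought to bear.
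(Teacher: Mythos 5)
Your construction coincides, step for step, with the paper's own proof: both set $b_1 = h$ and $b_{i+1} = L_{f_0} b_i$ (the pure drift chain), record the input-direction derivatives $g_{i,1} = L_{f_1} b_i$ as the coefficient functions of the chain, define $\overline{X} = \zclosure(b(X))$, get injectivity of $b$ from the fact that generators of $Q_X$ separate points of $X$, invert $b$ rationally, and read off the transformed dynamics and the response-map equivalence from birational equivalence. Your opening transcendence-degree argument for $Q_X = Q_{obs}(s)$ is a direct substitute for the paper's citation of \cite[Proposition 8]{nemcova:schuppen:2010:aam}. So in approach you and the paper agree.

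The difference is that you explicitly decline to prove the step on which everything hinges: that some finite initial segment $h, L_{f_0}h, \ldots, L_{f_0}^{\,n-1}h$ of the drift chain generates $Q_X$. The paper settles this step by appeal to Definition \ref{def:observabilityindex} and the discussion following it, asserting that the observability index $n_o$ is finite and that $b_1, \ldots, b_{n_o}$ generate $Q_X$; your attempt instead stops at ``this is the hard part.'' That makes it an incomplete proof rather than a proof: since the canonical form forces $\overline{x}_1 = h$ and $\overline{x}_{i+1} = L_{f_0}\overline{x}_i$, the existence of the required birational map is \emph{equivalent} to the drift-chain generation property, so nothing is established until it is. Moreover, the repair you gesture at cannot work: the hypothesis that $Q_X \setminus Q_{obs}(s)$ is not algebraic over $Q_{obs}(s)$ is used up in your first paragraph — once $Q_X = Q_{obs}(s)$ holds, the set $Q_X \setminus Q_{obs}(s)$ is empty and the hypothesis is vacuous, so it cannot ``be brought to bear'' on the drift-versus-full-family question. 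Your underlying worry is legitimate (the index of Definition \ref{def:observabilityindex} is defined via Lie derivatives along all $f_{\alpha}$, $\alpha \in U$, not along $f_0$ alone, and the paper bridges that distinction very tersely), but a complete proof must bridge it, and yours does not. A secondary overstatement: you claim the nondegeneracy condition $f_{n,1} \neq 0$ off an algebraic set ``follows'' from inherited algebraic controllability; the paper only conjectures that implication (see the remark below Definition \ref{def:rsystemobscanformsiso}), and its own proof does not verify it either, so you should not present it as established.
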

\begin{proof}
	Consider a minimal rational system $s$ in the representation,
	\begin{eqnarray*}
		dx(t)/dt
		& = & f_0(x(t)) + f_1(x(t)) ~ u(t), \\
		    y(t)
		& = & h(x(t)), ~ x(t) \in \mathbb{R}^n.	
        \end{eqnarray*}
	Define the new state variable,
	\begin{eqnarray*}
		\overline{x}_1
		& = & h(x) = b_1(x); ~ \mbox{then,} \\
		    d \overline{x}_1(t)/dt
		& = & \frac{dh(x(t))}{dt} =
		      \sum_{i=1}^n [ f_{i,0}(x) + f_{i,1}(x) u] ~
                                   \frac{\partial h(x)}{\partial x_i}.
	\end{eqnarray*}
	Define the rational function and the variable,
	\begin{eqnarray*}
		    g_{1,1}(x)
		& = & \sum_{i=1}^n \frac{\partial h(x)}{\partial x_i}
			 f_{i,1}(x) \in Q_{obs}(s) = Q_{X}, \\
                    \overline{x}_2
                & = & \frac{d \overline{x}_1(t)}{dt}
		      - g_{1,1}(x(t)) ~ u(t) 
        \end{eqnarray*}
	\begin{eqnarray*}
                & = & \sum_{i=1}^n f_{i,0}(x)
                      \frac{\partial h(x)}{\partial x_i}
                      = b_2(x) \in Q_{obs}(s) = Q_X, \\
		    d \overline{x}_1 (t)/dt
		& = & \overline{x}_2 (t) + g_{1,1}(x(t)) u(t).
        \end{eqnarray*}
Because the system $s$ is a minimal rational system 
such that $Q_X \setminus Q_{obs}(s)$ are not algebraic over $Q_{obs}(s)$,
it is rationally observable, see \cite[Proposition 8]{nemcova:schuppen:2010:aam}.       
Note that the inclusions 
$\sum_{i=1}^n \frac{\partial h(x)}{\partial x_i} f_{i,1}(x) \in Q_{obs}(s)$ and 
$b_2(x) \in Q_{obs}(s)$ 
follow from the definition of the observation field of $s$ and 
from the assumption $\{0\} \subseteq U$. 
The equality $Q_{obs}(s) = Q_{X}$ follows from the rational observability of the system $s$.
By induction it follows that,
	\begin{eqnarray*}
		    \overline{x}_i
		& = & b_{i}(x) \in Q_X, \\
                    d \overline{x}_i(t)/dt
		& = & \overline{x}_{i+1}(t) + g_{i,1}(x(t)) ~ u(t), ~
		      i = 2, \ldots
        \end{eqnarray*}
It follows from Definition \ref{def:observabilityindex} and the discussion below it 
that there exists a finite observability index $n_o$ such that 
$b_1(x), \ldots, b_{n_o}(x)$ generate $Q_X$. 
Because $Q_X$ is the smallest set of rational functions on $X$ 
which distinguishes the points of $X$, 
its generators $b_1(x), \ldots, b_{n_o}(x)$
distinguish the points of $X$ as well. 
Therefore, the map
\[
	b: x \in X \mapsto ( b_1(x), \ldots, b_{n_o}(x))
\]
is injective and maps $X$ onto $b(X)$. 
Let us define 
\[
	\overline{X} = \zclosure (b(X)).
\]
Then the rational map $b = ( b_1, \ldots, b_{n_o}): X \rightarrow \overline{X}$ 
is invertible with the rational inverse $b^{-1}: \overline{X} \rightarrow X$. 
The fact that the components of $b^{-1}$ are elements of $Q_{\overline{X}}$
follows from the construction of $b$ 
(its components contain the components of $h$, $L_{f_{\alpha}}h$ and 
transcendence basis elements of $Q_{obs}(s)$),
see \cite[Proposition V.2]{nemcova:petreczky:schuppen:2016:cdc:obs} for the idea.
Hence, the varieties $X$ and $\overline{X}$ are birationally equivalent, i.e.
	\begin{eqnarray*}
		&   & \forall ~ x \in X \backslash A_{1,e}, ~
		      \overline{x} \in \overline{X} \backslash A_{2,e}, ~\\
		&   & b^{-1}( b(x)) = x, ~ b(b^{-1}(\overline{x})) = \overline{x}.
	\end{eqnarray*}
	\par
        Next, define the function,
\begin{eqnarray*}
	\overline{f}_{1,1}(\overline{x})
		& = & g_{1,1}(b^{-1}(\overline{x})) \in Q_{\overline{X}}, ~
		         \mbox{then,} \\
                    d \overline{x}_1(t)/dt
		& = & \overline{x}_2(t) + \overline{f}_{1,1}(x(t)) u(t).
\end{eqnarray*}
It follows by induction that,
\begin{eqnarray*}
	            \overline{f}_{i,1}(\overline{x})
		& = & g_{i,1}(b^{-1}(\overline{x})) \in Q_{\overline{X}}, \\
                    d \overline{x}_i(t)/dt
		& = & \overline{x}_{i+1}(t) + \overline{f}_{i,1}(x(t)) ~ u(t), \\
		    \overline{x}_i
		& = & b_{i}(x) \in Q_X, ~~
		      i = 1, \ldots n_o; \\
	            d \overline{x}_{n_o}(t)/dt
                & = & \sum_{i=1}^n [f_{i,0}(x) + f_{i,1}(x) u]
			\frac{\partial b_{n_o}(x)}{\partial x_i} \\
                & = & [ \sum_{i=1}^b f_{i,0}(x)
				     \frac{\partial b_{n_o}(x)}{\partial x_i} ] + \\
		&   & + [ \sum_{i=1}^n f_{i,1} (x)
				     \frac{\partial b_{n_o}(x)}{\partial x_i} ] u(t) \\
		& = & g_{n_o,0}(x(t)) + g_{n_o,1}(x(t)) ~ u(t), \\
		& = & \overline{f}_{n_o,0}(\overline{x}(t))
		      + \overline{f}_{n_o,1}(\overline{x}(t)) ~ u(t), ~
                      \mbox{where,} \\
		    \overline{f}_{n_o,0}(\overline{x})
		& = & g_{n_o,0}(b^{-1}(\overline{x})), ~ \\
		    \overline{f}_{n_o,1} (\overline{x})
		& = & g_{n_o,1}(b^{-1}(\overline{x})) \in Q_{\overline{X}}.
\end{eqnarray*}
Thus a rational system in the observable canonical form is obtained.
Then it follows with \cite[Def. 18, 19]{nemcova:schuppen:2010:aam}
that the original system and the constructed system are
birationally equivalent and hence response-map equivalent.
\end{proof}
\begin{theorem}\label{th:obscanform1}
	Consider the set of rational systems 
	which is such that for any system $s$ in this set it holds that
	$Q_X \backslash Q_{obs}(s)$ is not algebraic over $Q_{obs}(s)$.
	Restrict further attention to minimal rational systems
	and denote the resulting set of systems by $S_{rr,min}$.
	\par
	The observable canonical form of Def. \ref{def:rsystemobscanformsiso}
	for the specific set of minimal rational systems $S_{rr,min}$ and 
	for response-map equivalence,
	is a well-defined canonical form.
\end{theorem}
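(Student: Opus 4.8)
The plan is to verify directly that the triple consisting of the set $S_{rr,min}$, the response-map equivalence relation $E_{rm}$, and the candidate subset $S_{rr,ocf}$ satisfies Definition \ref{def:canonicalform}; that is, to show that every $s \in S_{rr,min}$ is response-map equivalent to exactly one element of $S_{rr,ocf}$. The three propositions already established in this section are precisely the ingredients required, so the argument is essentially an assembly of them against the definition of a canonical form, matching the three proof obligations listed after Definition \ref{def:rsystemobscanformsiso}.

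First I would record that $S_{rr,ocf} \subseteq S_{rr,min}$, so that the candidate is a genuine subset of the set for which it is claimed to be canonical. This is exactly Proposition \ref{proposition:obscanformminimalrealization}: any system in the observable canonical form is rationally observable and a minimal realization of its own response map, and by the defining assumptions of Definition \ref{def:rsystemobscanformsiso} it is algebraically controllable and satisfies the non-algebraicity hypothesis, hence lies in $S_{rr,min}$. Next, for the existence half of Definition \ref{def:canonicalform}, I would invoke Proposition \ref{proposition:transformtoobscanform}: given any $s \in S_{rr,min}$, there is a birational map carrying it to a system $s_c$ in the observable canonical form with $s_c$ response-map equivalent to $s$, so $(s,s_c) \in E_{rm}$. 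This supplies at least one canonical-form representative in each equivalence class.

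For uniqueness, suppose $s_c, s_c' \in S_{rr,ocf}$ are both response-map equivalent to the same $s$; then they are response-map equivalent to each other, say with common response map $p$. Both are minimal realizations of $p$, so by the minimality condition of Definition \ref{def:realization} each has state-space dimension $\trdeg(Q_{obs}(p))$; in particular they share the same state-space dimension. Since each is rationally observable by Proposition \ref{proposition:obscanformminimalrealization} and algebraically controllable by the defining assumption of the observable canonical form, \cite[Th. 8]{nemcova:schuppen:2010:aam} shows that $s_c$ and $s_c'$ are birationally equivalent. Proposition \ref{proposition:obscanformuniqueness} then forces $s_c = s_c'$, which is the required uniqueness and completes the verification of Definition \ref{def:canonicalform}.

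The step I expect to require the most care is the uniqueness argument, and specifically the bookkeeping that the two canonical-form representatives genuinely have equal state-space dimension, since Proposition \ref{proposition:obscanformuniqueness} is stated only for systems of the same dimension. Once the equality of dimensions is derived from minimality, everything else is a direct citation of the three propositions, so the only real content of the theorem is confirming that their hypotheses line up exactly with the clauses of Definition \ref{def:canonicalform}.
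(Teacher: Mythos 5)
Your proposal is correct and follows essentially the same route as the paper: the theorem is proved by assembling Propositions \ref{proposition:obscanformminimalrealization}, \ref{proposition:transformtoobscanform}, and \ref{proposition:obscanformuniqueness} against Definition \ref{def:canonicalform}, exactly as you do. Your uniqueness step is in fact slightly more careful than the paper's, since you explicitly derive the equal-dimension hypothesis of Proposition \ref{proposition:obscanformuniqueness} from minimality (both dimensions equal $\trdeg(Q_{obs}(p))$), a point the paper's proof passes over silently.
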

\begin{proof}
	The following three items prove the theorem.
	(1) Each rational system in the observable canonical
	form is a minimal realization of its response map.
	This follows from Proposition \ref{proposition:obscanformminimalrealization}.
	(2) Every rational system of a variety which is a minimal realization
	of its response map can be transformed to a rational system
	in the observable canonical form.
	This follows from Proposition \ref{proposition:transformtoobscanform}.
        (3) That a rational system can be transformed to
	a unique rational system in the observable canonical form.
	This condition, due to minimal realizations being birationally equivalent,
	is equivalent to proving that two rational systems in the observable
	canonical form which are birationally related, are identical.
	This follows from Proposition \ref{proposition:obscanformuniqueness}.
\end{proof}
%
\section{CONCLUDING REMARKS}\label{sec:concludingremarks}
Further research is needed
on the algebraic formulation of canonical forms,
on the control canonical form for the set of rational systems, and
on the use of the observable canonical form for system identification.
%
%
%
\section*{ACKNOWLEDGMENT}
The authors thank Prof. Eva Zerz (RWTH Aachen) 
for fruitful discussions and valuable comments on the paper. 
%


%

\end{document}


%
\begin{definition}\label{def:rsystemcontrolcanformsiso}
Consider the set of rational systems on a variety
which are single-input-single-output and affine in the input, and
which are minimal realizations of the considered response maps.
Define the
{\em control canonical form} of such a rational system
for the response-map equivalence relation
as the algebraic structure described by the equations,
\begin{eqnarray}
            X
        & = & \{ x \in \mathbb{R}^n | 0 = p_i(x), ~ \forall ~ i \in \mathbb{Z}_k \},
                \nonumber \\
            dx_1(t)/dt
	& = & x_2(t) + f_1(x_1(t), \ldots, x_n(t)) u(t), ~ \\
            dx_i(t)/dt
	& = & x_{i+1}(t) + f_i(x_i(t), \ldots, x_n(t)) u(t), ~ \\
	&   & i = 2, 3, \ldots, n-1, \\
            dx_n(t)/dt
	& = & f_{n,0}(x_n(t)) + f_{n,1}(x_n(t)) u(t)), ~ \\
            y(t)
        & = & h(x(t)), ~~ \\
	&   & h \in Q_{X,\can}, ~ f_{1,0}, \ldots, f_{n,1} \in Q_{X, \can}. \nonumber
\end{eqnarray}
Conditions are needed to make this system representation
algebraically controllable and rationally observable.
These conditions are not yet known.
\end{definition}
%
%